\newcommand{\Mod}[1]{\ (\textup{mod}\ #1)}
\theoremstyle{plain} 
\newtheorem{theorem}{\indent\sc Theorem}[section]
\newtheorem{lemma}[theorem]{\indent\sc Lemma}
\newtheorem{corollary}[theorem]{\indent\sc Corollary}
\newtheorem{proposition}[theorem]{\indent\sc Proposition}
\theoremstyle{definition} 
\newtheorem{remark}[theorem]{\indent\sc Remark}
\newtheorem{example}[theorem]{\indent\sc Example}
\def\address#1#2{\begingroup
\noindent\parbox[t]{7.8cm}{%
\small{\scshape\ignorespaces#1}\par\vskip1ex
\noindent\small{\itshape E-mail address}%
\/: #2\par\vskip4ex}\hfill%
\endgroup}%
\title{On some extension of Gauss' work and applications (II)}
\author{
\textsc{Ick Sun Eum, Ho Yun Jung, Ja Kyung Koo and Dong Hwa Shin} 
}
\date{} 
\begin{document}

\allowdisplaybreaks

\maketitle

\footnote{ 
2010 \textit{Mathematics Subject Classification}. Primary 11E16; Secondary 11F03, 11G15, 11R37.}
\footnote{ 
\textit{Key words and phrases}. binary quadratic forms, class field theory, complex multiplication, modular functions.} \footnote{
\thanks{
The first author was supported by the Dongguk University Research Fund of 2018 and the National Research Foundation of Korea (NRF) grant funded by the Korea government (MSIT) (2017R1C1B5017567).
The second (corresponding) author was supported by the National Research Foundation of Korea (NRF) grant funded by the Korea Government (MIST) (2016R1A5A1008055 and
2017R1C1B2010652).
The fourth author was supported by the Hankuk University of Foreign Studies Research Fund of 2018 and the National Research Foundation of Korea (NRF) grant funded by the Korea government (MSIT) (2017R1A2B1006578).}
}

\begin{abstract}
Let $K$ be an imaginary quadratic field of discriminant $d_K$, and let $\mathfrak{n}$ be a
nontrivial integral ideal of $K$ in which $N$ is the smallest positive integer. Let
$\mathcal{Q}_N(d_K)$ be the set
of primitive positive definite binary quadratic forms of discriminant
$d_K$ whose leading coefficients are relatively prime to $N$. We adopt an equivalence relation
$\sim_\mathfrak{n}$ on $\mathcal{Q}_N(d_K)$ so that the set of equivalence classes $\mathcal{Q}_N(d_K)/\sim_\mathfrak{n}$
can be regarded as a group isomorphic to the ray class group of $K$ modulo $\mathfrak{n}$.
We further present an explicit isomorphism
of $\mathcal{Q}_N(d_K)/\sim_\mathfrak{n}$ onto $\mathrm{Gal}(K_\mathfrak{n}/K)$ in terms of Fricke invariants,
where $K_\mathfrak{n}$ is the ray class field of $K$ modulo $\mathfrak{n}$. This would be certain extension of the classical composition theory of  binary quadratic forms, originated and developed by Gauss and Dirichlet.
\end{abstract}

\maketitle

\section {Introduction}

For a negative integer $D$ such that $D\equiv0$ or $1\Mod{4}$, let $\mathcal{Q}(D)$ be the set of
primitive positive definite binary quadratic forms
$Q(x,\,y)=ax^2+bxy+cy^2$ of discriminant $b^2-4ac=D$.
Then, the modular group $\mathrm{SL}_2(\mathbb{Z})$ (or
$\mathrm{PSL}_2(\mathbb{Z})$) naturally acts on $\mathcal{Q}(D)$
and gives rise to
the proper equivalence $\sim$  as
\begin{equation*}
Q\sim Q'\quad\Longleftrightarrow\quad
Q=Q'^{\,\gamma}=
Q'\left(\gamma\begin{bmatrix}x\\y\end{bmatrix}\right)~
\textrm{for some}~\gamma\in\mathrm{SL}_2(\mathbb{Z}).
\end{equation*}
It was Gauss who first established a
systematic theory of composition for binary quadratic forms
in his monumental work \textit{Disquisitiones
arithmeticae} published in 1801 (\cite{Gauss}).
More precisely, the set of equivalence classes $\mathcal{C}(D)=\mathcal{Q}(D)/\sim$ becomes an abelian group under the direct composition, called the
\textit{form class group} of discriminant $D$.
Besides, in 2004 some elegant and enlightening progress was achieved
in higher composition theory by Bhargava (\cite{BhargavaI}, \cite{BhargavaII}
and \cite{BhargavaIII}).
See also \cite{Buell12}.
\par
Let $K=\mathbb{Q}(\sqrt{D})$ and $\mathcal{O}$ be the order
of discriminant $D$ in the imaginary quadratic field $K$.
Let $I(\mathcal{O})$ be the group of
proper fractional $\mathcal{O}$-ideals, and
let $P(\mathcal{O})$ be its subgroup of principal $\mathcal{O}$-ideals.
For each $Q=ax^2+bxy+cy^2\in\mathcal{Q}(D)$, let
$\omega_Q$ be the zero of $Q(x,\,1)$ in the complex
upper half-plane $\mathbb{H}$, namely,
\begin{equation*}
\omega_Q=\frac{-b+\sqrt{D}}{2a}.
\end{equation*}
Then, it is well known that
the form class group $\mathcal{C}(D)$ is
isomorphic to the $\mathcal{O}$-ideal class group
$\mathcal{C}(\mathcal{O})=
I(\mathcal{O})/P(\mathcal{O})$ through the isomorphism
\begin{equation}\label{CC}
\mathcal{C}(D)\stackrel{\sim}{\rightarrow}\mathcal{C}(\mathcal{O}),
\quad [Q]\mapsto[[\omega_Q,\,1]]=[\mathbb{Z}\omega_Q+\mathbb{Z}].
\end{equation}
As noted in \cite[Theorem 7.7]{Cox} it is not necessary to prove
that $\mathcal{C}(D)$ is a group. It suffices to show that the mapping stated in (\ref{CC})
is a well-defined bijection, through which the $\mathcal{O}$-ideal class group $\mathcal{C}(\mathcal{O})$ equips $\mathcal{C}(D)$
with a group structure, namely, the Dirichlet composition (\cite[(3.7)]{Cox} and \cite{Dirichlet}).
\par
On the other hand, if $H_\mathcal{O}$ denotes the ring class field of order $\mathcal{O}$, then it is generated by the $j$-invariant $j(\mathcal{O})$ by the theory of complex multiplication.
Furthermore, there is an isomorphism
\begin{equation}\label{CG}
\mathcal{C}(\mathcal{O})\stackrel{\sim}{\rightarrow}\mathrm{Gal}(H_\mathcal{O}/K),\quad
[\mathfrak{a}]\mapsto\left(j(\mathcal{O})\mapsto
j(\overline{\mathfrak{a}})\right)
\end{equation}
(\cite[Theorem 11.1 and Corollary 11.37]{Cox}). Thus we attain the isomorphism
\begin{equation*}
\mathcal{C}(D)\stackrel{\sim}{\rightarrow}\mathrm{Gal}(H_\mathcal{O}/K),\quad
[Q]\mapsto\left(j(\mathcal{O})\mapsto j([-\overline{\omega}_Q,\,1])\right)
\end{equation*}
by (\ref{CC}) and (\ref{CG}).
\par
For a nontrivial ideal $\mathfrak{n}$ of
the maximal order
$\mathcal{O}_K$ of $K$, let $\mathcal{C}(\mathfrak{n})$ be
the ray class group modulo $\mathfrak{n}$, namely,
\begin{equation*}
\mathcal{C}(\mathfrak{n})=I_K(\mathfrak{n})/P_{K,\,1}(\mathfrak{n})
\end{equation*}
where $I_K(\mathfrak{n})$ is the group of fractional
ideals of $K$ relatively prime to $\mathfrak{n}$
and $P_{K,\,1}(\mathfrak{n})$ is its subgroup given by
\begin{equation*}
P_{K,\,1}(\mathfrak{n})=\{\nu\mathcal{O}_K~|~
\nu\in K^*~\textrm{such that}~\nu\equiv^*1\Mod{\mathfrak{n}}\}.
\end{equation*}
The \textit{ray class field} modulo $\mathfrak{n}$,
denoted by $K_\mathfrak{n}$, is defined
to be a unique abelian extension of $K$
with Galois group isomorphic to $\mathcal{C}(\mathfrak{n})$
in which every ramified prime ideal divides $\mathfrak{n}$.
One can refer to \cite[Chapter V]{Janusz} or \cite{Neukirch} for the class field theory.
\par
For a positive integer $N$, let
\begin{equation*}
\mathcal{Q}_N(d_K)=\{
ax^2+bxy+cy^2\in\mathcal{Q}(d_K)~|~\gcd(N,\,a)=1\},
\end{equation*}
and let $\Gamma_1(N)$ be
the congruence subgroup of $\mathrm{SL}_2(\mathbb{Z})$ defined by
\begin{equation*}
\Gamma_1(N)=\left\{\gamma\in\mathrm{SL}_2(\mathbb{Z})~|~
\gamma\equiv\begin{bmatrix}1&\mathrm{*}\\
0&1\end{bmatrix}\Mod{NM_2(\mathbb{Z})}\right\}.
\end{equation*}
Here, $M_2(\mathbb{Z})$ means the $\mathbb{Z}$-module of
$2\times2$ matrices over $\mathbb{Z}$.
One can define an equivalence relation $\sim_{\Gamma_1(N)}$
on $\mathcal{Q}_N(d_K)$ by
\begin{equation}\label{QQQQ}
Q\sim_{\Gamma_1(N)}Q'\quad
\Longleftrightarrow\quad
Q=Q'^{\,\gamma}~\textrm{for some}~\gamma\in\Gamma_1(N).
\end{equation}
Recently,
Jung, Koo and Shin showed that if
$\mathfrak{n}=N\mathcal{O}_K$, then
the mapping
\begin{equation}\label{QGC}
\phi_N:\mathcal{Q}_N(d_K)/\sim_{\Gamma_1(N)}\rightarrow
\mathcal{C}(\mathfrak{n}),\quad
[Q]\mapsto[[\omega_Q,\,1]]
\end{equation}
is a well-defined bijection (\cite[Theorem 2.5 and Proposition 5.3]{J-K-S18}), through which
$\mathcal{Q}_N(d_K)/\sim_{\Gamma_1(N)}$
can be regarded as a group isomorphic to
$\mathcal{C}(\mathfrak{n})$.
Let $\mathcal{F}_{1,\,N}(\mathbb{Q})$ be
the field of meromorphic modular functions for
$\Gamma_1(N)$ with rational Fourier coefficients.
By using the Shimura reciprocity law, they further provided the isomorphism
\begin{equation*}
\mathcal{Q}_N(d_K)/\sim_{\Gamma_1(N)}
\stackrel{\sim}{\rightarrow}
\mathrm{Gal}(K_\mathfrak{n}/K),
\quad[Q]\mapsto\left(h(\tau_K)\mapsto
h(-\overline{\omega}_Q)~|~h(\tau)~\in
\mathcal{F}_{1,\,N}(\mathbb{Q})~
\textrm{is finite at $\tau_K$}\right)
\end{equation*}
where $\tau_K$ is the CM-point associated with the principal form of discriminant $d_K$.
\par
In this paper, for an arbitrary nontrivial ideal
$\mathfrak{n}$ of $\mathcal{O}_K$ where $N$ is the
least positive integer,
we shall define an equivalence relation $\sim_\mathfrak{n}$
on $\mathcal{Q}_N(d_K)$ so that the canonical mapping
\begin{eqnarray}\label{canonical}
\phi_\mathfrak{n}~:~
\mathcal{Q}_N(d_K)/\sim_\mathfrak{n}&\rightarrow&
\mathcal{C}(\mathfrak{n})\\
\mathrm{[}Q\mathrm{]}~~&\mapsto&[[\omega_Q,\,1]]\nonumber
\end{eqnarray}
becomes a well-defined bijection (Theorem \ref{canonicalisomorphism}).
In fact, the equivalence relation $\sim_\mathfrak{n}$ turns
out to be induced from a congruence subgroup of level $N$
(Proposition \ref{inducedfrom}).
Thus we may consider $\mathcal{Q}_N(d_K)/\sim_\mathfrak{n}$ as a group isomorphic to $\mathcal{C}(\mathfrak{n})$
whose group operation
will be described in $\S$\ref{composition}
in terms of Gauss-Dirichlet composition instead of
Bhargava's composition on $2\times2\times2$ cubes of integers.
Moreover, we shall construct
an isomorphism of the \textit{extended form class group}
$\mathcal{Q}_N(d_K)/\sim_\mathfrak{n}$
onto $\mathrm{Gal}(K_\mathfrak{n}/K)$ in terms
of Fricke invariants (Theorem \ref{explicitGalois}). To this end, it is necessary
to utilize the concept of
canonical bases for nontrivial integral ideals (Proposition \ref{Buellcanonical}). We hope that this result,
as some extension of Gauss's work, would enrich
the classical theory of complex multiplication for the
imaginary quadratic case.

\section {Extended form class groups as ray class groups}

Throughout this paper, we let $K$ be an imaginary quadratic field of discriminant $d_K$
with ring of integers $\mathcal{O}_K$. Furthermore, let $\mathfrak{n}$ be a nontrivial ideal of $\mathcal{O}_K$
and $N$ be the least positive integer in $\mathfrak{n}$.
In this section, by using an equivalence relation
induced from a congruence subgroup of $\mathrm{SL}_2(\mathbb{Z})$ of level $N$, we shall construct an extended form
class group isomorphic to the ray class group $\mathcal{C}(\mathfrak{n})$.
\par
First, the following lemma explains why
we have to use $\mathcal{Q}_N(d_K)$ instead
of the whole of $\mathcal{Q}(d_K)$.

\begin{lemma}\label{prime}
If $Q=ax^2+bxy+cy^2\in\mathcal{Q}(d_K)$, then the lattice $[\omega_Q,\,1]$ \textup{(}$=\mathbb{Z}\omega_Q+\mathbb{Z}$\textup{)} is a fractional ideal of $K$ with $\mathrm{N}_{K/\mathbb{Q}}([\omega_Q,\,1])=1/a$.
\end{lemma}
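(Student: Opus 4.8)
The plan is to verify two things in turn: first that $L:=[\omega_Q,1]$ is stable under multiplication by $\mathcal{O}_K$ (hence a fractional ideal, being already a rank-two lattice spanning $K$), and then that $\mathrm{N}_{K/\mathbb{Q}}(L)=1/a$. Throughout I would exploit that $\omega_Q=\frac{-b+\sqrt{d_K}}{2a}$ is a root of $ax^2+bx+c$, so that $a\omega_Q^2=-b\omega_Q-c$, together with the congruence $b\equiv d_K\Mod{2}$ forced by $d_K=b^2-4ac$; the latter guarantees that $\frac{d_K\pm b}{2}\in\mathbb{Z}$.

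For the ideal property I would write $\mathcal{O}_K=[\omega_K,1]$ with the standard generator $\omega_K=\frac{d_K+\sqrt{d_K}}{2}$, so that it suffices to check $\omega_K\cdot 1\in L$ and $\omega_K\cdot\omega_Q\in L$. Using $\sqrt{d_K}=2a\omega_Q+b$ one computes $\omega_K=a\omega_Q+\frac{d_K+b}{2}$, which lies in $L$ by the integrality noted above. Multiplying this by $\omega_Q$ and substituting $a\omega_Q^2=-b\omega_Q-c$ gives $\omega_K\omega_Q=\frac{d_K-b}{2}\omega_Q-c\in L$. Since $\omega_Q\in\mathbb{H}$ is non-real, $\{\omega_Q,1\}$ is a $\mathbb{Q}$-basis of $K$, so $L$ is a lattice, and the two inclusions show $\mathcal{O}_K L\subseteq L$; hence $L$ is a fractional ideal of $K$.

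For the norm I would pass to $aL=[a\omega_Q,a]=[\frac{-b+\sqrt{d_K}}{2},a]$. The element $\frac{-b+\sqrt{d_K}}{2}$ lies in $\mathcal{O}_K$ (again by $b\equiv d_K\Mod{2}$), so $aL\subseteq\mathcal{O}_K$ is an integral ideal and its absolute norm is the index $[\mathcal{O}_K:aL]$. Expressing the basis $\{\frac{-b+\sqrt{d_K}}{2},a\}$ in terms of $\{\omega_K,1\}$ produces a triangular change-of-basis matrix of determinant $a$, whence $\mathrm{N}(aL)=a$. Multiplicativity of the norm together with $\mathrm{N}(a\mathcal{O}_K)=a^2$ then yields $a=a^2\,\mathrm{N}(L)$, i.e. $\mathrm{N}(L)=1/a$, the positivity being consistent with $a>0$ for a positive definite form.

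There is no deep obstacle here; the computation becomes routine once the generator $\omega_K$ of $\mathcal{O}_K$ is chosen well. The only point demanding care is the parity bookkeeping—ensuring that the half-integers $\frac{d_K\pm b}{2}$ and the element $\frac{-b+\sqrt{d_K}}{2}$ are genuine algebraic integers—which rests entirely on the single congruence $b\equiv d_K\Mod{2}$. As an alternative to the index computation one could read off the norm from the discriminant identity $\mathrm{disc}[\omega_Q,1]=\mathrm{N}(L)^2\,d_K$, since $(\omega_Q-\overline{\omega_Q})^2=d_K/a^2$, but the index approach is more self-contained.
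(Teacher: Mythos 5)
Your proof is correct: the verification that $\mathcal{O}_K=[\omega_K,1]$ with $\omega_K=\frac{d_K+\sqrt{d_K}}{2}$ multiplies $[\omega_Q,1]$ into itself via $a\omega_Q^2=-b\omega_Q-c$ and the parity $b\equiv d_K\Mod{2}$, and the index computation $[\mathcal{O}_K:a[\omega_Q,1]]=a$ combined with $\mathrm{N}_{K/\mathbb{Q}}(a\mathcal{O}_K)=a^2$, are exactly right. The paper itself supplies no argument here, only citations to Cox (Theorem 7.7(i)) and Eum--Koo--Shin (Lemma 2.3(iii)); your computation is precisely the standard proof underlying those references, so nothing further is needed.
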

\begin{proof}
See \cite[Theorem 7.7 (i)]{Cox} and \cite[Lemma 2.3 (iii)]{E-K-S}.
\end{proof}

Define an equivalence relation $\sim_\mathfrak{n}$ on
the set $\mathcal{Q}_N(d_K)$ by
\begin{equation*}
Q\sim_\mathfrak{n}Q'
\quad\Longleftrightarrow\quad
[[\omega_Q,\,1]]=[[\omega_{Q'},\,1]]~\textrm{in}~
\mathcal{C}(\mathfrak{n}).
\end{equation*}
Note that
$Q\sim_\mathfrak{n}Q'$ implies
$Q\sim_\mathfrak{m}Q'$
for every nontrivial ideal $\mathfrak{m}$ of $\mathcal{O}_K$ dividing $\mathfrak{n}$, since
there is a canonical homomorphism $\mathcal{C}(\mathfrak{n})
\rightarrow\mathcal{C}(\mathfrak{m})$.

\begin{theorem}\label{canonicalisomorphism}
The set of equivalence classes $\mathcal{Q}_N(d_K)/\sim_\mathfrak{n}$ can be
regarded as a group isomorphic to $\mathcal{C}(\mathfrak{n})$.
\end{theorem}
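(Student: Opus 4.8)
The plan is to follow the philosophy recalled after (\ref{CC}): rather than exhibiting a group law on $\mathcal{Q}_N(d_K)/\sim_\mathfrak{n}$ directly, I would show that the canonical map $\phi_\mathfrak{n}$ in (\ref{canonical}) is a well-defined bijection, and then transport the group structure of $\mathcal{C}(\mathfrak{n})$ through $\phi_\mathfrak{n}^{-1}$. Once $\phi_\mathfrak{n}$ is a bijection, declaring $[Q]\cdot[Q']=\phi_\mathfrak{n}^{-1}\bigl(\phi_\mathfrak{n}([Q])\phi_\mathfrak{n}([Q'])\bigr)$ makes $\mathcal{Q}_N(d_K)/\sim_\mathfrak{n}$ into a group and $\phi_\mathfrak{n}$ an isomorphism, which is precisely the assertion. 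With this reduction in place, two things remain: that $\phi_\mathfrak{n}$ is well defined (and injective) and that it is surjective.

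Well-definedness and injectivity are built into the relation $\sim_\mathfrak{n}$: by its very definition $Q\sim_\mathfrak{n}Q'$ holds exactly when $[[\omega_Q,1]]=[[\omega_{Q'},1]]$ in $\mathcal{C}(\mathfrak{n})$, so $\phi_\mathfrak{n}$ separates distinct classes and respects the equivalence. The only genuine point here is that $[[\omega_Q,1]]$ is a legitimate element of $\mathcal{C}(\mathfrak{n})$, i.e. that the fractional ideal $[\omega_Q,1]$ is prime to $\mathfrak{n}$; this is exactly where the restriction to $\mathcal{Q}_N(d_K)$ enters. I would argue it as follows. For $Q=ax^2+bxy+cy^2$, Lemma \ref{prime} gives $\mathrm{N}_{K/\mathbb{Q}}([\omega_Q,1])=1/a$, and a direct computation shows $a[\omega_Q,1]=\bigl[(-b+\sqrt{d_K})/2,\,a\bigr]$ is an \emph{integral} ideal of $\mathcal{O}_K$ of norm $a$. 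Since $N\in\mathfrak{n}$, every prime ideal dividing $\mathfrak{n}$ divides $N\mathcal{O}_K$; as $\gcd(N,a)=1$, both $a\mathcal{O}_K$ and the norm-$a$ integral ideal $a[\omega_Q,1]$ are prime to $\mathfrak{n}$, and hence so is $[\omega_Q,1]=\bigl(a[\omega_Q,1]\bigr)\,(a\mathcal{O}_K)^{-1}$. Thus $\phi_\mathfrak{n}$ indeed lands in $\mathcal{C}(\mathfrak{n})$.

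For surjectivity I would invoke the already-established case $\mathfrak{n}=N\mathcal{O}_K$. Because $N\in\mathfrak{n}$ we have $N\mathcal{O}_K\subseteq\mathfrak{n}$, i.e. $\mathfrak{n}\mid N\mathcal{O}_K$, so there is a canonical surjective homomorphism $\mathcal{C}(N\mathcal{O}_K)\twoheadrightarrow\mathcal{C}(\mathfrak{n})$ sending the class of an ideal prime to $N\mathcal{O}_K$ to its class modulo $\mathfrak{n}$. By (\ref{QGC}) the assignment $[Q]\mapsto[[\omega_Q,1]]$ is a bijection of $\mathcal{Q}_N(d_K)/\sim_{\Gamma_1(N)}$ onto $\mathcal{C}(N\mathcal{O}_K)$; in particular the map $\mathcal{Q}_N(d_K)\to\mathcal{C}(N\mathcal{O}_K)$, $Q\mapsto[[\omega_Q,1]]$, is onto. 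This map is compatible with the projection above, because for $Q\in\mathcal{Q}_N(d_K)$ the ideal $[\omega_Q,1]$ is prime to $N\mathcal{O}_K$ (by the argument of the previous paragraph with $\mathfrak{n}=N\mathcal{O}_K$) and the projection merely coarsens the equivalence. Composing the surjection $\mathcal{Q}_N(d_K)\to\mathcal{C}(N\mathcal{O}_K)$ with $\mathcal{C}(N\mathcal{O}_K)\twoheadrightarrow\mathcal{C}(\mathfrak{n})$ shows that $Q\mapsto[[\omega_Q,1]]\in\mathcal{C}(\mathfrak{n})$ is surjective, and since the $\sim_\mathfrak{n}$-classes are by definition the fibres of this map, the induced $\phi_\mathfrak{n}$ is onto.

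I expect the surjectivity to be the crux. The tempting self-contained route --- take an integral ideal $\mathfrak{b}$ in the target ray class, read off a form from a canonical basis $\mathfrak{b}=[a,(-b+\sqrt{d_K})/2]$ (cf.\ Proposition \ref{Buellcanonical}), and check it lies in $\mathcal{Q}_N(d_K)$ --- runs into two coupled difficulties: one must arrange $\gcd(N,\mathrm{N}_{K/\mathbb{Q}}(\mathfrak{b}))=1$ (not merely $\mathfrak{b}$ prime to $\mathfrak{n}$, which is weaker because $\mathfrak{n}$ may omit some primes dividing $N$), and, more seriously, one must account for the discrepancy $[\omega_Q,1]=\tfrac1a\mathfrak{b}$, so that in $\mathcal{C}(\mathfrak{n})$ one lands in $[\tfrac1a\mathcal{O}_K][\mathfrak{b}]$ rather than $[\mathfrak{b}]$; since $a=\mathrm{N}_{K/\mathbb{Q}}(\mathfrak{b})$ is itself determined by the chosen representative, matching a prescribed class requires an approximation argument to select $\mathfrak{b}$ with norm prime to $N$ inside the correct class. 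Routing surjectivity through the known bijection for $\mathfrak{n}=N\mathcal{O}_K$ sidesteps exactly this bookkeeping, which is why I would prefer it.
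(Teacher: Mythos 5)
Your proposal is correct and follows essentially the same route as the paper: well-definedness and injectivity come for free from the definition of $\sim_\mathfrak{n}$, and surjectivity is obtained by composing the known bijection $\phi_N$ for $\mathfrak{n}=N\mathcal{O}_K$ from (\ref{QGC}) with the canonical surjection $\mathcal{C}(N\mathcal{O}_K)\twoheadrightarrow\mathcal{C}(\mathfrak{n})$, exactly as in the paper's commutative diagram. Your extra verification that $[\omega_Q,\,1]$ is prime to $\mathfrak{n}$ is a welcome detail the paper leaves implicit.
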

\begin{proof}
By the definition of $\sim_\mathfrak{n}$, the mapping
$\phi_\mathfrak{n}:\mathcal{Q}_N(d_K)/\sim_\mathfrak{n}
\rightarrow\mathcal{C}(\mathfrak{n})$ stated in (\ref{canonical}) is a well-defined injection.
Let $\sim_{\Gamma_1(N)}$
be the equivalence relation on $\mathcal{Q}_N(d_K)$ given in
(\ref{QQQQ}), and let $\mathfrak{N}=N\mathcal{O}_K$.
For $Q,\,Q'\in\mathcal{Q}_N(d_K)$, we deduce that
\begin{eqnarray*}
Q\sim_{\Gamma_1(N)}Q'&\Longleftrightarrow&
[[\omega_Q,\,1]]=[[\omega_{Q'},\,1]]~\textrm{in}~\mathrm{Cl}(\mathfrak{N})\\
&&\hspace{2cm}\textrm{because the mapping $\phi_N$ described in (\ref{QGC}) is bijective}\\
&\Longleftrightarrow&
Q\sim_\mathfrak{N}Q'\quad\textrm{by the definition of $\sim_\mathfrak{N}$}.
\end{eqnarray*}
Thus $\sim_{\Gamma_1(N)}$
 is the same as
$\sim_\mathfrak{N}$, from which we obtain the following commutative diagram:
\begin{figure}[H]
\begin{equation*}
\xymatrixcolsep{5pc}\xymatrix{
\mathcal{Q}_N(d_K)/\sim_{\Gamma_1(N)} \ar[r]^-\sim_{~~~~~\phi_N}  \ar@{->>}[d]&
\mathcal{C}(\mathfrak{N})
\ar@{->>}[d]^{\textrm{ canonical}}\\
\mathcal{Q}_N(d_K)/\sim_\mathfrak{n}
\ar[r]_-{\phi_\mathfrak{n}} &
\mathcal{C}(\mathfrak{n})
}
\end{equation*}
\caption{A commutative diagram for surjectivity of $\phi_\mathfrak{n}$}\label{comm}
\end{figure}
\noindent
It then follows that $\phi_\mathfrak{n}$ is surjective,
and hence it is bijective. Therefore one can
consider $\mathcal{Q}_N(d_K)/\sim_\mathfrak{n}$ as
a group isomorphic to $\mathcal{C}(\mathfrak{n})$
via the bijection $\phi_\mathfrak{n}$, as desired.
\end{proof}

Let $Q_0=x^2+b_Kxy+c_Ky^2$ be the principal form in $C(d_K)$ and
$\tau_K=\omega_{Q_0}$, namely,
\begin{equation*}
Q_0=\left\{
\begin{array}{ll}\displaystyle
x^2-\frac{d_K}{4}y^2
& \textrm{if}~d_K\equiv0\Mod{4},\\
\displaystyle x^2+xy+\frac{1-d_K}{4}y^2 & \textrm{if}~d_K\equiv1\Mod{4}
\end{array}\right.
\end{equation*}
and
\begin{equation*}
\tau_K=\frac{-b_K+\sqrt{d_K}}{2}=\left\{
\begin{array}{ll}\displaystyle
\frac{\sqrt{d_K}}{2}
& \textrm{if}~d_K\equiv0\Mod{4},\\
\displaystyle \frac{-1+\sqrt{d_K}}{2} & \textrm{if}~d_K\equiv1\Mod{4}.
\end{array}\right.
\end{equation*}
Then we have $\mathcal{O}_K=[\tau_K,\,1]$ (\cite[(5.14)]{Cox}).

\begin{proposition}\label{Buellcanonical}
If $\mathfrak{a}$ is a nontrivial ideal of $\mathcal{O}_K$, then
there is a unique triple of integers $(a,\,b,\,c)$ satisfying
$\mathfrak{a}=[a\tau_K+b,\,c]$ and
\begin{equation}\label{abc}
0<a\leq c,\quad0\leq b<c,\quad
a\,|\,c,\quad a\,|\,b.
\end{equation}
Here, $c$ is the smallest positive integer belonging to $\mathfrak{a}$.
\end{proposition}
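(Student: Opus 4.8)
The plan is to recognize this statement as the Hermite normal form for the lattice $\mathfrak{a}$ with respect to the ordered $\mathbb{Z}$-basis $(\tau_K,\,1)$ of $\mathcal{O}_K$, with the extra divisibility relations coming from the fact that $\mathfrak{a}$ is an \emph{ideal} rather than a mere sublattice. First I would note that $\mathfrak{a}$ is a rank-two $\mathbb{Z}$-submodule of $\mathcal{O}_K=\mathbb{Z}\tau_K+\mathbb{Z}$ of finite index, and that $\mathfrak{a}\cap\mathbb{Z}\neq\{0\}$: for any nonzero $\beta\in\mathfrak{a}$ one has $\mathrm{N}_{K/\mathbb{Q}}(\beta)=\beta\overline{\beta}\in\mathfrak{a}\cap\mathbb{Z}$ with $\mathrm{N}_{K/\mathbb{Q}}(\beta)>0$, because $\overline{\beta}\in\mathcal{O}_K$ and $\mathfrak{a}$ is an ideal.

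Then the existence of the stated basis is a standard lattice computation. Writing each element of $\mathcal{O}_K$ uniquely as $m\tau_K+n$, I would consider the projection $\pi:\mathfrak{a}\to\mathbb{Z}$ onto the $\tau_K$-coordinate, $m\tau_K+n\mapsto m$. Its image is $a\mathbb{Z}$ for a unique integer $a>0$ (positive since $\mathfrak{a}$ has rank two), and its kernel is $\mathfrak{a}\cap\mathbb{Z}=c\mathbb{Z}$ for a unique $c>0$, which is precisely the smallest positive integer in $\mathfrak{a}$. Choosing $\alpha\in\mathfrak{a}$ with $\pi(\alpha)=a$ and subtracting a suitable multiple of $c$, I can normalize its constant term into the range $0\leq b<c$, obtaining $\mathfrak{a}=\mathbb{Z}(a\tau_K+b)+\mathbb{Z}c=[a\tau_K+b,\,c]$ with $0\leq b<c$.

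The heart of the matter, and the step where the ideal hypothesis is essential, is deriving the divisibilities $a\mid c$ and $a\mid b$. Here I would use that $\mathfrak{a}$ is stable under multiplication by $\tau_K$. On one hand $c\tau_K=\tau_K\cdot c\in\mathfrak{a}$ has $\tau_K$-coordinate $c$, so $c\in\pi(\mathfrak{a})=a\mathbb{Z}$, giving $a\mid c$ and hence $a\leq c$. On the other hand, using the relation $\tau_K^2=-b_K\tau_K-c_K$ (as $\tau_K$ is a root of $x^2+b_Kx+c_K=0$), I compute $\tau_K(a\tau_K+b)=(b-ab_K)\tau_K-ac_K\in\mathfrak{a}$, whose $\tau_K$-coordinate $b-ab_K$ must lie in $a\mathbb{Z}$; since $a\mid ab_K$, this forces $a\mid b$.

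Finally, for uniqueness I would observe that every quantity is intrinsically determined by $\mathfrak{a}$: from the basis one checks $\mathfrak{a}\cap\mathbb{Z}=c\mathbb{Z}$, so $c$ is the least positive integer in $\mathfrak{a}$; the projection image $\pi(\mathfrak{a})=a\mathbb{Z}$ pins down $a$; and once $a,\,c$ are fixed, two admissible values $b,\,b'$ give $b-b'\in\mathfrak{a}\cap\mathbb{Z}=c\mathbb{Z}$ with $0\leq b,\,b'<c$, forcing $b=b'$. The only genuinely nontrivial ingredient is the multiplication-by-$\tau_K$ argument for the divisibility conditions; the existence of the normalized basis and its uniqueness are routine Hermite-normal-form bookkeeping.
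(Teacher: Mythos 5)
Your proof is correct and complete. The paper itself does not argue this proposition at all --- it simply cites Buell \cite[Theorem 6.9]{Buell89} --- so your write-up supplies the missing argument, and it is essentially the standard one: Hermite normal form of the sublattice $\mathfrak{a}\subseteq[\tau_K,\,1]$ gives the basis $\{a\tau_K+b,\,c\}$ with $a=$ the positive generator of the image of the $\tau_K$-coordinate projection and $c=$ the positive generator of $\mathfrak{a}\cap\mathbb{Z}$, while the ideal property enters exactly twice: once to see $\mathfrak{a}\cap\mathbb{Z}\neq\{0\}$ (via $\beta\overline{\beta}\in\mathfrak{a}$), and once, through stability under multiplication by $\tau_K$, to get the divisibilities. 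Your computation $\tau_K(a\tau_K+b)=(b-ab_K)\tau_K-ac_K$ using $\tau_K^2+b_K\tau_K+c_K=0$ is consistent with the paper's normalization of $\tau_K$ and $Q_0$, and correctly yields $a\mid b$; likewise $c\tau_K\in\mathfrak{a}$ yields $a\mid c$ and hence $a\leq c$. The uniqueness argument (each of $a$, $c$ is intrinsic to $\mathfrak{a}$, and $b$ is pinned down modulo $c$) is also sound. The one step you wave at --- that $\mathbb{Z}(a\tau_K+b)+\mathbb{Z}c$ exhausts $\mathfrak{a}$ --- is indeed routine: for $m\tau_K+n\in\mathfrak{a}$ one has $m=ka$, and then $m\tau_K+n-k(a\tau_K+b)\in\mathfrak{a}\cap\mathbb{Z}=c\mathbb{Z}$. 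Nothing is missing.
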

\begin{proof}
See \cite[Theorem 6.9]{Buell89}.
\end{proof}

\begin{remark}\label{canonicalconverse}
\begin{itemize}
\item[(i)] We call such a $\mathbb{Z}$-basis $\{a\tau_K+b,\,c\}$
the \textit{canonical basis} for $\mathfrak{a}$.
\item[(ii)] Conversely, if $(a,\,b,\,c)$ is a triple of integers
satisfying (\ref{abc}) and
$c\,|\,\mathrm{N}_{K/\mathbb{Q}}(a\tau_K+b)$, then
\begin{equation*}
\{(a\tau_K+b)\nu_1+c\nu_2~|~\nu_1,\,\nu_2\in\mathcal{O}_K\}
\end{equation*}
is a nontrivial ideal of $\mathcal{O}_K$ whose
canonical basis is $\{a\tau_K+b,\,c\}$
(\cite[Proposition 6.12 and Theorem 6.15]{Buell89}).
\end{itemize}
\end{remark}

Let $\{\xi_1,\,\xi_2\}$ be the canonical basis for $\mathfrak{n}$, that is,
\begin{equation}\label{xat}
\begin{bmatrix}\xi_1\\\xi_2\end{bmatrix}=
\begin{bmatrix}a_1&a_2\\0&N\end{bmatrix}
\begin{bmatrix}\tau_K\\1\end{bmatrix}
\end{equation}
for some integers $a_1$ and $a_2$ satisfying
\begin{equation*}
0<a_1\leq N,\quad
0\leq a_2<N,\quad
a_1\,|\,N,\quad
a_1\,|\,a_2
\end{equation*}
by Proposition \ref{Buellcanonical}.
\par
For an integer $a$ relatively prime to $N$, we denote by $\widetilde{a}$ any integer satisfying $a\widetilde{a}\equiv1\Mod{N}$.
Furthermore, for $\alpha=\begin{bmatrix}p&q\\r&s\end{bmatrix}
\in\mathrm{GL}_2(\mathbb{R})$ with $\det(\alpha)>0$ and
$\tau\in\mathbb{H}$, let
$j(\alpha,\,\tau)=r\tau+s$.

\begin{lemma}\label{lemmapqrs}
Let $Q=ax^2+bxy+cy^2,\,Q'\in\mathcal{Q}_N(d_K)$. Then,
$Q\sim_\mathfrak{n}Q'$ if and only if
there is a matrix $\alpha=\begin{bmatrix}p&q\\r&s\end{bmatrix}$ in $\mathrm{SL}_2(\mathbb{Z})$ such that $Q=Q'^{\,\alpha}$ and
\begin{equation}\label{rs}
r\equiv0\Mod{a_1}\quad\textrm{and}\quad
s\equiv1+\widetilde{a}\left(\frac{a_2}{a_1}-
\frac{b_K-b}{2}\right)r\Mod{N}.
\end{equation}
\end{lemma}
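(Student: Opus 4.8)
The plan is to reduce the statement to analysing the single scalar relating the two lattices $[\omega_Q,\,1]$ and $[\omega_{Q'},\,1]$, and then to read off (\ref{rs}) from the canonical basis of $\mathfrak{n}$. First I would note that $Q\sim_\mathfrak{n}Q'$ forces proper equivalence: by definition it gives $[[\omega_Q,\,1]]=[[\omega_{Q'},\,1]]$ in $\mathcal{C}(\mathfrak{n})$, and the natural surjection $\mathcal{C}(\mathfrak{n})\rightarrow\mathcal{C}(\mathcal{O}_K)$ carries this to equality in the ordinary class group, whence $Q=Q'^{\,\alpha}$ for some $\alpha\in\mathrm{SL}_2(\mathbb{Z})$ by the isomorphism (\ref{CC}). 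For any such $\alpha=\begin{bmatrix}p&q\\r&s\end{bmatrix}$ one has $\omega_{Q'}=\alpha\cdot\omega_Q=(p\omega_Q+q)/(r\omega_Q+s)$, and since $\{p\omega_Q+q,\,r\omega_Q+s\}$ is again a $\mathbb{Z}$-basis of $[\omega_Q,\,1]$,
\[
[\omega_{Q'},1]=\frac{1}{r\omega_Q+s}[\omega_Q,1],\qquad\textrm{so that}\qquad [\omega_Q,1]=\lambda\,[\omega_{Q'},1]\quad\textrm{with}\quad\lambda=j(\alpha,\omega_Q)=r\omega_Q+s.
\]

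Next I would translate the ray-class condition. As $\gcd(a,\,N)=1$, both lattices are prime to $\mathfrak{n}$, so $[[\omega_Q,\,1]]=[[\omega_{Q'},\,1]]$ in $\mathcal{C}(\mathfrak{n})$ holds precisely when $\lambda\mathcal{O}_K\in P_{K,\,1}(\mathfrak{n})$. Here the existential quantifier on $\alpha$ is essential: replacing $\alpha$ by $\alpha\gamma$ with $\gamma\in\mathrm{Aut}(Q)$ leaves $Q=Q'^{\,\alpha\gamma}$ unchanged but multiplies $\lambda$ by $j(\gamma,\omega_Q)$, and as $\gamma$ runs through $\mathrm{Aut}(Q)$ this factor runs through all of $\mathcal{O}_K^{*}$ (the automorphs of $Q$ corresponding bijectively to the units). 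Consequently $\lambda\mathcal{O}_K\in P_{K,\,1}(\mathfrak{n})$ holds for the proper equivalence if and only if there is a choice of $\alpha$ for which the generator itself satisfies $\lambda\equiv^{*}1\Mod{\mathfrak{n}}$. This reduces the whole lemma to determining the congruences on $(r,\,s)$ equivalent to $r\omega_Q+s\equiv^{*}1\Mod{\mathfrak{n}}$.

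For the explicit computation I would clear the denominator of $\omega_Q$. Writing $\sqrt{d_K}=2\tau_K+b_K$ and using $b\equiv b_K\Mod{2}$ gives $\omega_Q=\tfrac1a\big(\tau_K+\tfrac{b_K-b}{2}\big)$, so that $\mu:=a\lambda=r\tau_K+\big(\tfrac{(b_K-b)r}{2}+sa\big)$ lies in $\mathcal{O}_K$. Since $\gcd(a,\,N)=1$, both $\mu$ and $a$ are prime to $\mathfrak{n}$, and hence $\lambda=\mu/a\equiv^{*}1\Mod{\mathfrak{n}}$ is equivalent to the ring congruence $\mu\equiv a\Mod{\mathfrak{n}}$, i.e. $r\tau_K+\big(\tfrac{(b_K-b)r}{2}+sa-a\big)\in\mathfrak{n}$. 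Feeding in the canonical basis $\mathfrak{n}=\mathbb{Z}(a_1\tau_K+a_2)+\mathbb{Z}N$, an element $u\tau_K+v$ with $u,\,v\in\mathbb{Z}$ lies in $\mathfrak{n}$ exactly when $a_1\mid u$ and $v\equiv\tfrac{a_2}{a_1}u\Mod{N}$ (both $a_2/a_1$ and $(b_K-b)/2$ being integers). Taking $u=r$ yields $r\equiv0\Mod{a_1}$, and taking $v=\tfrac{(b_K-b)r}{2}+sa-a$ yields $a(s-1)\equiv\big(\tfrac{a_2}{a_1}-\tfrac{b_K-b}{2}\big)r\Mod{N}$; multiplying by $\widetilde{a}$ produces exactly the second congruence of (\ref{rs}).

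The main obstacle I anticipate is the unit bookkeeping in the middle step: relating the ideal membership $\lambda\mathcal{O}_K\in P_{K,\,1}(\mathfrak{n})$, which is only an equality of ideals and hence defined up to $\mathcal{O}_K^{*}$, to the sharper generator-level condition $\lambda\equiv^{*}1\Mod{\mathfrak{n}}$. This discrepancy is precisely what forces the lemma to quantify existentially over $\alpha$ rather than fix it, and it must be tracked carefully through the automorph action. The accompanying technical point is justifying that clearing the denominator $a$ (legitimate because $\gcd(a,\,N)=1$) converts the multiplicative congruence $\equiv^{*}$ into the additive congruence $\mu\equiv a$ in $\mathcal{O}_K/\mathfrak{n}$; once these two points are secured, reading off (\ref{rs}) from the canonical basis is routine.
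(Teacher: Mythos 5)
Your proposal is correct and follows essentially the same route as the paper's proof: reduce to proper equivalence via the surjection onto $\mathcal{C}(\mathcal{O}_K)$, identify the scalar relating the two lattices with $j(\alpha,\,\omega_Q)$, absorb the unit ambiguity into the choice of $\alpha$ (the paper does this by re-deriving $\alpha$ from the basis change after writing $\lambda=\zeta j(\gamma,\,\omega_Q)$, you do it via the automorph group of $Q$ — the same mechanism), and then read off (\ref{rs}) by clearing the denominator $a$ and testing membership in $\mathfrak{n}=[a_1\tau_K+a_2,\,N]$. No gaps.
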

\begin{proof}
Assume that $Q\sim_\mathfrak{n}Q'$. Since
$Q\sim_{\mathcal{O}_K}Q'$, we induce from
the isomorphism given in (\ref{CC}) that
\begin{equation}\label{QQg}
Q=Q'^{\,\gamma}\quad\textrm{for some}~\gamma\in\mathrm{SL}_2(\mathbb{Z}).
\end{equation}
And, we obtain by the definition of $\sim_\mathfrak{n}$ that
\begin{eqnarray}
[\omega_Q,\,1]&=&\lambda[\omega_{Q'},\,1]\quad
\textrm{for some}~\lambda\in K^*~\textrm{such that}~
\lambda\equiv^*1\Mod{\mathfrak{n}}\label{wlw}\\
&=&\lambda[\gamma(\omega_Q),\,1]\quad\textrm{by (\ref{QQg})}\nonumber\\
&=&\frac{\lambda}{j(\gamma,\,\omega_Q)}[\omega_Q,\,1].\nonumber
\end{eqnarray}
Thus we derive
\begin{equation}\label{lzj}
\lambda=\zeta j(\gamma,\,\omega_Q)
\quad\textrm{for some}~\zeta\in\mathcal{O}_K^*
\end{equation}
and get by (\ref{wlw})
\begin{equation*}
[\omega_Q,\,1]=\zeta j(\gamma,\,\omega_Q)[\omega_{Q'},\,1].
\end{equation*}
We then attain
\begin{equation}\label{alpha}
\zeta j(\gamma,\,\omega_Q)\begin{bmatrix}
\omega_{Q'}\\1
\end{bmatrix}=\alpha\begin{bmatrix}\omega_Q\\1\end{bmatrix}
\quad\textrm{for some}~\alpha=\begin{bmatrix}p&q\\r&s\end{bmatrix}\in
\mathrm{SL}_2(\mathbb{Z}),
\end{equation}
from which
\begin{equation*}
\omega_{Q'}=
\frac{\zeta j(\gamma,\,\omega_Q)\omega_{Q'}}{\zeta j(\gamma,\,\omega_Q)}=
\alpha(\omega_Q)
\end{equation*}
and
\begin{equation*}
r\omega_Q+s=\zeta j(\gamma,\,\omega_Q)=\lambda\equiv^*1\Mod{\mathfrak{n}}
\end{equation*}
by (\ref{lzj}). Since $a\omega_Q\in\mathcal{O}_K$, we deduce
\begin{equation*}
r(a\omega_Q)+as-a\in\mathfrak{n}=[a_1\tau_K+a_2,\,N],
\end{equation*}
and hence
\begin{equation*}
r\left(\tau_K+\frac{b_K-b}{2}\right)+as-a=(a_1\tau_K+a_2)e
+Nf\quad\textrm{for some}~e,\,f\in\mathbb{Z}.
\end{equation*}
This yields
\begin{equation*}
(r-a_1e)\tau_K+\left(\frac{b_K-b}{2}\,r+as-a-a_2e-Nf\right)=0
\end{equation*}
and so
\begin{equation*}
r=a_1e\quad\textrm{and}\quad
as=a+a_2e+Nf-\frac{b_K-b}{2}\,r.
\end{equation*}
Therefore, we achieve $Q=Q'^{\,\alpha}$ and (\ref{rs}).
\par
Conversely, assume that there is
$\alpha=\begin{bmatrix}p&q\\r&s\end{bmatrix}\in\mathrm{SL}_2(\mathbb{Z})$
satisfying $Q=Q'^{\,\alpha}$ and (\ref{rs}). We see that
\begin{eqnarray*}
aj(\alpha,\,\omega_Q)-a&=&
r(a\omega_Q)+as-a\\
&=&a_1e(a\omega_Q)+\left(\frac{a_2}{a_1}-\frac{b_K-b}{2}\right)
a_1e+aNf\quad\textrm{for some}~e,\,f\in\mathbb{Z}
\quad\textrm{by (\ref{rs})}\\
&=&(a_1\tau_K+a_2)e+N(af)\\
&\in&\mathfrak{n}.
\end{eqnarray*}
This implies by the fact $\gcd(N,\,a)=1$ that $j(\alpha,\,\omega_Q)\equiv^*1\Mod{\mathfrak{n}}$.
Since
\begin{equation*}
[\omega_{Q'},\,1]=[\alpha(\omega_Q),\,1]=\frac{1}{j(\alpha,\,\omega_Q)}
[\omega_Q,\,1],
\end{equation*}
we get $[[\omega_{Q'},\,1]]=[[\omega_Q,\,1]]$ in
$\mathcal{C}(\mathfrak{n})$, and hence
$Q\sim_\mathfrak{n}Q'$.
\end{proof}

\begin{remark}
The congruences (\ref{rs}) can be rewritten as
\begin{equation*}
\begin{bmatrix}r&s\end{bmatrix}
\begin{bmatrix}1&(b_K-b)/2\\0&a\end{bmatrix}
\begin{bmatrix}N/a_1&-a_2/a_1\\0&1\end{bmatrix}
\equiv\begin{bmatrix}0&1\end{bmatrix}
\begin{bmatrix}1&(b_K-b)/2\\0&a\end{bmatrix}
\begin{bmatrix}N/a_1&-a_2/a_1\\0&1\end{bmatrix}
\Mod{NM_{1,\,2}(\mathbb{Z})}
\end{equation*}
where $M_{1,\,2}(\mathbb{Z})$ is the
$\mathbb{Z}$-module of $1\times2$ matrices over $\mathbb{Z}$.
\end{remark}

Let
\begin{equation*}
\Gamma_\mathfrak{n}=\left\{
\begin{bmatrix}c_1&c_2\\
c_3&c_4\end{bmatrix}\in\mathrm{SL}_2(\mathbb{Z})~|~
c_1\equiv1\Mod{N},\,
c_2\equiv0\Mod{\frac{N}{a_1}},\,
c_3\equiv0\Mod{a_1},\,
c_4\equiv1\Mod{N}\right\}
\end{equation*}
which is a congruence subgroup of level $N$.
Let
\begin{equation*}
S=\begin{bmatrix}0&-1\\1&0\end{bmatrix}\quad
\textrm{and}\quad
T=\begin{bmatrix}1&1\\0&1\end{bmatrix}.
\end{equation*}

\begin{lemma}\label{lemmamn}
Let $\alpha=\begin{bmatrix}p&q\\
r&s\end{bmatrix}\in\mathrm{SL}_2(\mathbb{Z})$ such that
\begin{equation}\label{r0s1}
r\equiv0\Mod{a_1}\quad\textrm{and}\quad
s\equiv1+kr\Mod{N}~\textrm{for some}~k\in\mathbb{Z}.
\end{equation}
Then there is a pair of integers $(m,\,n)$ so that
$T^n\alpha T^m\in\Gamma_\mathfrak{n}$.
\end{lemma}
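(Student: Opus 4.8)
The plan is to write $\beta:=T^n\alpha T^m$, read off its entries as functions of the free integers $m$ and $n$, and then choose $m,\,n$ so that the four defining congruences of $\Gamma_\mathfrak{n}$ all hold. A direct multiplication gives
\[
\beta=\begin{bmatrix}p+nr & (p+nr)m+q+ns\\ r & rm+s\end{bmatrix},
\]
so its entries are $c_1=p+nr$, $c_2=(p+nr)m+q+ns$, $c_3=r$, and $c_4=rm+s$. The congruence $c_3\equiv0\Mod{a_1}$ holds automatically by the hypothesis $r\equiv0\Mod{a_1}$, so the task reduces to arranging $c_1\equiv1\Mod{N}$, $c_4\equiv1\Mod{N}$, and $c_2\equiv0\Mod{N/a_1}$.

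First I would secure the two diagonal congruences. Set $g=\gcd(r,\,N)$, and note that $a_1\mid g$ since $a_1\mid r$ and $a_1\mid N$. From $g\mid r$ and $s\equiv1+kr\Mod{N}$ one gets $s\equiv1\Mod{g}$, and then the relation $ps-qr=1$ (that is, $\det\alpha=1$) forces $p\equiv1\Mod{g}$ as well. Now $c_1\equiv1\Mod{N}$ is precisely the linear congruence $nr\equiv1-p\Mod{N}$, which is solvable in $n$ because $g\mid1-p$; likewise $c_4\equiv1\Mod{N}$ is $rm\equiv1-s\Mod{N}$, solvable in $m$ because $g\mid1-s$. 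Since any solution $m$ is determined only modulo $N/g$, I keep this residual freedom in reserve for the final congruence.

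The delicate point, and the main obstacle, is the off-diagonal congruence $c_2\equiv0\Mod{N/a_1}$, which the diagonal choices do not by themselves guarantee. With $n$ fixed and $m$ chosen as above, the determinant identity $c_1c_4-c_2c_3=1$ together with $c_1\equiv c_4\equiv1\Mod{N}$ yields $c_2r=c_1c_4-1\equiv0\Mod{N}$, hence only the weaker divisibility $c_2\equiv0\Mod{N/g}$ (note $N/g$ divides $N/a_1$). To upgrade this, I would retain $n$ and replace $m$ by $m+(N/g)j$ for a suitable $j\in\mathbb{Z}$; such a shift leaves $c_1$, $c_3$, and the class of $c_4$ modulo $N$ unchanged, while altering $c_2$ by $c_1(N/g)j$. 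Writing the current value as $c_2=(N/g)w$ and factoring $N/a_1=(N/g)(g/a_1)$, the requirement $c_2\equiv0\Mod{N/a_1}$ becomes $c_1 j\equiv-w\Mod{g/a_1}$, which is solvable since $c_1\equiv1\Mod{N}$ gives $\gcd(c_1,\,g/a_1)=1$. Choosing such a $j$ produces the required pair $(m,\,n)$ with $T^n\alpha T^m\in\Gamma_\mathfrak{n}$, completing the argument.
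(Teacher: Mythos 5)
Your argument is correct, but it runs along a noticeably different track from the paper's. The paper exploits the explicit integer $k$ in the hypothesis: it sets $m=-k$ at the outset, which makes the bottom-right entry $s'=s-rk\equiv1\Mod{N}$ in one stroke, then chooses $n$ with $ns'\equiv-q'\Mod{N/a_1}$ to annihilate the top-right entry \emph{exactly} modulo $N/a_1$, and finally reads off $c_1\equiv1\Mod{N}$ from the determinant identity. You instead treat the two diagonal entries symmetrically, solving the linear congruences $nr\equiv1-p$ and $rm\equiv1-s\Mod{N}$ (justified by $p\equiv s\equiv1\Mod{\gcd(r,N)}$, which you correctly extract from the hypothesis and $\det\alpha=1$), obtain the off-diagonal entry only modulo $N/g$ from the determinant, and then spend the residual freedom $m\mapsto m+(N/g)j$ to upgrade to modulo $N/a_1$. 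Every step checks out: in particular $N\mid c_2r$ does give $(N/g)\mid c_2$ since $\gcd(r/g,N/g)=1$, the shift in $m$ leaves $c_4$ unchanged modulo $N$ because $r(N/g)\in N\mathbb{Z}$, and the final congruence $c_1j\equiv-w\Mod{g/a_1}$ is trivially solvable as $c_1\equiv1$. The paper's route is shorter because it kills $c_2$ exactly rather than approximately; yours makes the role of $\gcd(r,N)$ transparent and shows that the hypothesis on $s$ is only ever used in the form $s\equiv1\Mod{\gcd(r,N)}$ --- which, since $r\mathbb{Z}+N\mathbb{Z}=\gcd(r,N)\mathbb{Z}$, is in fact equivalent to the stated condition $s\equiv1+kr\Mod{N}$ for some $k$.
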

\begin{proof}
If we set $m=-k$, then
\begin{equation*}
\alpha T^m=\begin{bmatrix}p&q\\r&s\end{bmatrix}
\begin{bmatrix}1&-k\\0&1\end{bmatrix}=
\begin{bmatrix}p&q'\\
r&s'\end{bmatrix}\quad
\textrm{with}~q'=-pk+q~\textrm{and}~
s'=-rk+s.
\end{equation*}
Observe by (\ref{r0s1}) that
\begin{equation}\label{s'1N}
s'\equiv1\Mod{N}.
\end{equation}
Furthermore, if we let $n$ be an integer satisfying
\begin{equation}\label{nsq}
ns'\equiv-q'\Mod{\frac{N}{a_1}},
\end{equation}
then we have
\begin{equation*}
T^n\alpha T^m=\begin{bmatrix}1&n\\
0&1\end{bmatrix}\begin{bmatrix}p&q'\\
r&s'\end{bmatrix}=
\begin{bmatrix}p+nr&q'+ns'\\
r&s'\end{bmatrix}
\end{equation*}
with
\begin{equation*}
q'+ns'\equiv0\Mod{\frac{N}{a_1}}
\quad\textrm{and}\quad
p+nr\equiv1\Mod{N}
\end{equation*}
by (\ref{r0s1}), (\ref{s'1N}), (\ref{nsq})
and the fact $\det(T^n\alpha T^m)=1$.
This proves the lemma.
\end{proof}

Now, we shall show that the equivalence relation $\sim_\mathfrak{n}$ on $\mathcal{Q}_N(d_K)$ is essentially induced from the congruence subgroup $\Gamma_\mathfrak{n}$.

\begin{proposition}\label{inducedfrom}
Let $Q=ax^2+bxy+cy^2,\,Q'\in\mathcal{Q}_N(d_K)$. Then,
$Q\sim_\mathfrak{n}Q'$ if and only if
\begin{eqnarray*}
Q=Q'^{\,T^u\gamma T^v}&&\textrm{for some}~\gamma=\begin{bmatrix}c_1&c_2\\
c_3&c_4\end{bmatrix}\in\Gamma_\mathfrak{n}~\textrm{and}~
(u,\,v)\in\mathbb{Z}^2~\textrm{satisfying}\\
&&c_3v+c_4\equiv1+\widetilde{a}\left(\frac{a_2}{a_1}-
\frac{b_K-b}{2}\right)c_3\Mod{N}.
\end{eqnarray*}
\end{proposition}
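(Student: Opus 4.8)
The plan is to reduce the whole statement to the characterization already secured in Lemma \ref{lemmapqrs}, which asserts that $Q\sim_\mathfrak{n}Q'$ holds precisely when there exists $\alpha=\begin{bmatrix}p&q\\r&s\end{bmatrix}\in\mathrm{SL}_2(\mathbb{Z})$ with $Q=Q'^{\,\alpha}$ satisfying the congruences (\ref{rs}). The task then becomes a purely matrix-theoretic one: to translate the factorization $\alpha=T^u\gamma T^v$ with $\gamma\in\Gamma_\mathfrak{n}$ into the congruences (\ref{rs}), and back. The single computational fact that drives everything is that left multiplication by $T^u$ and right multiplication by $T^v$ on $\gamma=\begin{bmatrix}c_1&c_2\\c_3&c_4\end{bmatrix}$ leaves the lower-left entry unchanged and sends the lower-right entry to $c_3v+c_4$; concretely, the bottom row of $T^u\gamma T^v$ equals $\begin{bmatrix}c_3&c_3v+c_4\end{bmatrix}$, independently of $u$. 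Throughout I set $k=\widetilde{a}\left(\frac{a_2}{a_1}-\frac{b_K-b}{2}\right)$, which is an integer since $a_1\mid a_2$ and $b_K\equiv b\equiv d_K\Mod{2}$, so that the second congruence in (\ref{rs}) reads $s\equiv1+kr\Mod{N}$ and the displayed congruence of the proposition reads $c_3v+c_4\equiv1+kc_3\Mod{N}$.

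For the forward direction I would begin with an $\alpha=\begin{bmatrix}p&q\\r&s\end{bmatrix}$ supplied by Lemma \ref{lemmapqrs}, so that $Q=Q'^{\,\alpha}$ and (\ref{rs}) hold. With $k$ as above, the congruences (\ref{rs}) are exactly $r\equiv0\Mod{a_1}$ and $s\equiv1+kr\Mod{N}$, which are the hypotheses of Lemma \ref{lemmamn}. That lemma then yields integers $(m,\,n)$ with $\gamma:=T^n\alpha T^m\in\Gamma_\mathfrak{n}$. Taking $u=-n$ and $v=-m$ gives $\alpha=T^u\gamma T^v$, hence $Q=Q'^{\,T^u\gamma T^v}$. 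Finally I would verify the required congruence directly from the bottom-row computation: writing $\gamma=\begin{bmatrix}c_1&c_2\\c_3&c_4\end{bmatrix}=T^n\alpha T^m$ one reads off $c_3=r$ and $c_4=rm+s$, so $c_3v+c_4=-rm+(rm+s)=s$; thus $c_3v+c_4\equiv1+kc_3\Mod{N}$ is nothing but $s\equiv1+kr\Mod{N}$, which is true.

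For the converse I would run the same bookkeeping in reverse. Given $\gamma\in\Gamma_\mathfrak{n}$ and $(u,\,v)\in\mathbb{Z}^2$ with $c_3v+c_4\equiv1+kc_3\Mod{N}$, I set $\alpha=T^u\gamma T^v$, whose bottom row is $\begin{bmatrix}c_3&c_3v+c_4\end{bmatrix}$, so that $r=c_3$ and $s=c_3v+c_4$ as integers. The membership $\gamma\in\Gamma_\mathfrak{n}$ forces $c_3\equiv0\Mod{a_1}$, giving the first congruence of (\ref{rs}), while the hypothesis gives $s=c_3v+c_4\equiv1+kc_3=1+kr\Mod{N}$, which is the second. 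Hence $\alpha$ satisfies (\ref{rs}) and $Q=Q'^{\,\alpha}$, so Lemma \ref{lemmapqrs} returns $Q\sim_\mathfrak{n}Q'$, completing the equivalence.

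I do not foresee a genuine obstacle: the entire argument is the matching of the bottom row of $T^u\gamma T^v$ against (\ref{rs}) together with the two invocations of Lemmas \ref{lemmapqrs} and \ref{lemmamn}. The only point demanding a moment's care is recognizing that the integer $k$ appearing in Lemma \ref{lemmamn} may—and must—be chosen as the specific value $\widetilde{a}\left(\frac{a_2}{a_1}-\frac{b_K-b}{2}\right)$ dictated by (\ref{rs}), and confirming that this value is indeed an integer so that Lemma \ref{lemmamn} legitimately applies.
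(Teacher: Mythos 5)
Your proposal is correct and follows essentially the same route as the paper: both directions reduce to Lemma \ref{lemmapqrs} via Lemma \ref{lemmamn}, with the identification $r=c_3$, $s=c_3v+c_4$ read off from the bottom row of $T^u\gamma T^v$. Your extra remark that $k=\widetilde{a}\left(\frac{a_2}{a_1}-\frac{b_K-b}{2}\right)$ is an integer (since $a_1\mid a_2$ and $b\equiv b_K\Mod{2}$, both forms having discriminant $d_K$) is a point the paper leaves implicit, and is a welcome clarification.
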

\begin{proof}
Assume that $Q\sim_\mathfrak{n}Q'$. By Lemma \ref{lemmapqrs}, there is
$\alpha=\begin{bmatrix}p&q\\r&s\end{bmatrix}\in\mathrm{SL}_2(\mathbb{Z})$ such that $Q=Q'^{\,\alpha}$ and
\begin{equation}\label{r0ands1}
r\equiv0\Mod{a_1}\quad\textrm{and}\quad
s\equiv1+\widetilde{a}\left(\frac{a_2}{a_1}-
\frac{b_K-b}{2}\right)r\Mod{N}.
\end{equation}
It then follows from Lemma \ref{lemmamn} that there is $(m,\,n)\in\mathbb{Z}^2$ with
$T^n\alpha T^m\in\Gamma_\mathfrak{n}$.
If we set $\gamma=T^n\alpha T^m$, $u=-n$ and $v=-m$, then
we establish
\begin{equation*}
Q=Q'^{\,\alpha}=Q'^{\,T^u\gamma T^v}.
\end{equation*}
And, if we let $\gamma=\begin{bmatrix}c_1&c_2\\
c_3&c_4\end{bmatrix}$, then we see that
\begin{equation*}
\alpha=\begin{bmatrix}
p&q\\r&s
\end{bmatrix}=T^u\gamma T^v=
\begin{bmatrix}
1&u\\0&1
\end{bmatrix}\begin{bmatrix}
c_1&c_2\\c_3&c_4
\end{bmatrix}
\begin{bmatrix}
1&v\\0&1
\end{bmatrix}=
\begin{bmatrix}
c_1+uc_3&(c_1+uc_3)v+c_2+uc_4\\
c_3&c_3v+c_4
\end{bmatrix}.
\end{equation*}
Thus we attain $r=c_3$, and so
\begin{equation*}
c_3v+c_4=s\equiv1+\widetilde{a}\left(\frac{a_2}{a_1}-\frac{b_K-b}{2}
\right)c_3\Mod{N}
\end{equation*}
by (\ref{r0ands1}).
\par
Conversely, assume that there are $\gamma=\begin{bmatrix}c_1&c_2\\c_3&c_4\end{bmatrix}\in\Gamma_\mathfrak{n}$
and $(u,\,v)\in\mathbb{Z}^2$ such that $Q=Q'^{\,T^u\gamma T^v}$ and
\begin{equation*}
c_3v+c_4\equiv1+\widetilde{a}\left(\frac{a_2}{a_1}-
\frac{b_K-b}{2}\right)c_3\Mod{N}.
\end{equation*}
Since $\gamma\in\Gamma_\mathfrak{n}$ and
\begin{equation*}
T^u\gamma T^v=\begin{bmatrix}
c_1+uc_3&(c_1+uc_3)v+c_2+uc_4\\
c_3&c_3v+c_4
\end{bmatrix},
\end{equation*}
we get $c_3\equiv0\Mod{a_1}$. Therefore, we derive by Lemma
\ref{lemmapqrs} that $Q\sim_\mathfrak{n}Q'$.
\end{proof}

\begin{remark}
In particular, if $\mathfrak{n}=N\mathcal{O}_K$, then we obtain
$a_1=N$, $a_2=0$ and so $\Gamma_\mathfrak{n}=\Gamma_1(N)$.
Since $T\in\Gamma_1(N)$, we conclude by Proposition \ref{inducedfrom} that
$\sim_\mathfrak{n}$ is the same as $\sim_{\Gamma_1(N)}$ given
in (\ref{QQQQ}), which recovers the previous results of \cite{E-K-S} and \cite{J-K-S18}.
\end{remark}

\section {The composition law}\label{composition}

Now, we shall modify
the classical Gauss-Dirichlet composition in order
to achieve the composition law on the extended form class group $\mathcal{Q}_N(d_K)/\sim_\mathfrak{n}$.
Although this composition law can be explained directly from
Figure \ref{comm} and
\cite[Remark 2.10]{E-K-S}
for the case where $\mathfrak{n}=N\mathcal{O}_K$, we would like to
include this section for completeness.
\par
Let $Q=ax^2+bxy+cy^2,\,Q'=a'x^2+b'xy+c'y^2\in\mathcal{Q}_N(d_K)$.
By Theorem \ref{canonicalisomorphism} we must have
\begin{equation*}
[Q][Q']~\textrm{in}~\mathcal{Q}_N(d_K)/\sim_\mathfrak{n}
=\phi_\mathfrak{n}^{-1}\left(
\phi_\mathfrak{n}([Q])\phi_\mathfrak{n}([Q'])
\right)=
\phi_\mathfrak{n}^{-1}\left(
[[\omega_Q,\,1]][[\omega_{Q'},\,1]]
\right).
\end{equation*}
Let
$\mathfrak{a}=[\omega_Q,\,1][\omega_{Q'},\,1]$.
We get by Lemma \ref{prime} that
\begin{equation*}
\mathfrak{a}^{-1}=\frac{1}{\mathrm{N}_{K/\mathbb{Q}}(\mathfrak{a})}\,
\overline{\mathfrak{a}}=aa'\overline{\mathfrak{a}}=
[-a\overline{\omega}_Q,\,a][-a'\overline{\omega}_{Q'},\,a'],
\end{equation*}
which shows that $\mathfrak{a}^{-1}$ is an integral ideal in
the ray class $\left(\phi_\mathfrak{n}([Q])
\phi_\mathfrak{n}([Q'])\right)^{-1}$.
\par
Now, one can take a matrix $\gamma$ in $\mathrm{SL}_2(\mathbb{Z})$ so that
the new quadratic form
\begin{equation}\label{Q''}
Q''=a''x^2+b''xy+c''y^2=Q'^{\,\gamma}
\end{equation}
satisfies
$\gcd(a,\,a'',\,(b+b'')/2)=1$ (\cite[Lemmas 2.3 and 2.25]{Cox}).
We then obtain
\begin{equation}\label{B}
[\omega_Q,\,1][\omega_{Q''},\,1]=
\left[\frac{-B+\sqrt{d_K}}{2aa''},\,1
\right]
\end{equation}
where $B$ is a unique integer modulo $2aa''$ satisfying
\begin{equation*}
B\equiv b\Mod{2a},\quad
B\equiv b''\Mod{2a''}\quad\textrm{and}\quad
B^2\equiv d_K\Mod{4aa''}
\end{equation*}
(\cite[Lemma 3.2 and (7.13)]{Cox}).
Furthermore, we know
by (\ref{Q''}) and (\ref{B}) that
\begin{equation}\label{another}
\mathfrak{a}=[\omega_Q,\,1][\gamma(\omega_{Q''}),\,1]
=\frac{1}{j(\gamma,\,\omega_{Q''})}
[\omega_Q,\,1][\omega_{Q''},\,1]=\frac{1}{j(\gamma,\,\omega_{Q''})}\left[
\frac{-B+\sqrt{d_K}}{2aa''},\,1\right].
\end{equation}
\par
Let $\nu_1,\,\nu_2\in K^*$ such that
\begin{equation*}
\mathfrak{a}=[\nu_1,\,\nu_2]\quad\textrm{and}\quad
\nu=\frac{\nu_1}{\nu_2}\in\mathbb{H}.
\end{equation*}
By (\ref{another}) we may just take
\begin{equation*}
\nu_1=
\frac{-B+\sqrt{d_K}}{2aa''j(\gamma,\,\omega_{Q''})},
\quad
\nu_2=\frac{1}{j(\gamma,\,\omega_{Q''})}\quad
\textrm{and}\quad\nu=
\frac{-B+\sqrt{d_K}}{2aa''}.
\end{equation*}
Since $\mathfrak{a}^{-1}$ is an integral ideal of $K$,
we see $1\in\mathfrak{a}$ and so
\begin{equation}\label{1=uv}
1=u\nu_1+v\nu_2\quad\textrm{for some}~u,\,v\in\mathbb{Z}.
\end{equation}
Here, one can readily check $\gcd(N,\,u,\,v)=1$
because $\mathfrak{a}^{-1}$ is relatively prime to $\mathfrak{n}$.
Take a matrix
$\sigma=\begin{bmatrix}\mathrm{*}&\mathrm{*}\\
u'&v'\end{bmatrix}$ in $\mathrm{SL}_2(\mathbb{Z})$ such that
\begin{equation}\label{sigmauv}
\sigma\equiv\begin{bmatrix}\mathrm{*}&\mathrm{*}\\
u&v\end{bmatrix}\Mod{NM_2(\mathbb{Z})},
\end{equation}
which is possible by the surjectivity of the reduction $\mathrm{SL}_2(\mathbb{Z})\rightarrow\mathrm{SL}_2(\mathbb{Z}/N\mathbb{Z})$
(\cite[Lemma 1.38]{Shimura}).
If we set $\widetilde{\omega}=\sigma(\nu)$, then we deduce that
\begin{equation}\label{o1a}
[\widetilde{\omega},\,1]=[\sigma(\nu),\,1]
=\frac{1}{u'\nu+v'}[\nu,\,1]
=\frac{1}{u'\nu_1+v'\nu_2}[\nu_1,\,\nu_2]
=\frac{1}{u'\nu_1+v'\nu_2}\mathfrak{a}.
\end{equation}
Observe by (\ref{1=uv}) and (\ref{sigmauv}) that
\begin{equation*}
u'\nu_1+v'\nu_2-1=u'\nu_1+v'\nu_2-(u\nu_1+v\nu_2)
=(u'-u)\nu_1+(v'-v)\nu_2\in N\mathfrak{a}
\subseteq\mathfrak{n}\mathfrak{a},
\end{equation*}
and hence
\begin{equation*}
u'\nu_1+v'\nu_2\equiv^*1\Mod{\mathfrak{n}}.
\end{equation*}
This implies by (\ref{o1a})
\begin{equation*}
[[\widetilde{\omega},\,1]]=[\mathfrak{a}]\quad\textrm{in}~\mathcal{C}(\mathfrak{n}).
\end{equation*}
\par
Finally, if we let $\widetilde{Q}$
be the quadratic form in $\mathcal{Q}_N(d_K)$ satisfying $\omega_{\widetilde{Q}}=\widetilde{\omega}=\sigma(\nu)$, then
we attain
\begin{equation*}
[Q][Q']=[\widetilde{Q}].
\end{equation*}
More precisely, we have
\begin{equation*}
[Q][Q']=
\left[\left(aa''x^2+Bxy+\frac{B^2-d_K}{4aa''}y^2\right)^{\sigma^{-1}}\right].
\end{equation*}

\section {Generation of ray class fields by Fricke invariants}

From now on, we further assume that
$\mathfrak{n}$ is a proper ideal of $\mathcal{O}_K$ and so
$N\geq2$. In this section, we shall show that $K_\mathfrak{n}$
is in fact a specialization over $K$ of a certain modular function field
by utilizing Fricke invariants.
\par
For a lattice $\Lambda$ in $\mathbb{C}$, let
\begin{equation}\label{g2g3}
g_2(\Lambda)=60\sum_{\lambda\in\Lambda-\{0\}}
\frac{1}{\lambda^4},\quad
g_3(\Lambda)=140\sum_{\lambda\in\Lambda-\{0\}}
\frac{1}{\lambda^6},\quad
\Delta(\Lambda)=g_2(\Lambda)^3-27g_3(\Lambda)^2
\end{equation}
and
\begin{equation}\label{defj}
j(\Lambda)=1728\frac{g_2(\Lambda)^3}{\Delta(\Lambda)}.
\end{equation}
And, let $\wp(z;\,\Lambda)$ be the \textit{Weierstrass $\wp$-function} relative
to $\Lambda$ given by
\begin{equation}\label{wp}
\wp(z;\,\Lambda)=\frac{1}{z^2}+\sum_{\lambda\in\Lambda-\{0\}}
\left\{\frac{1}{(z-\lambda)^2}-\frac{1}{\lambda^2}\right\}
\quad(z\in\mathbb{C}).
\end{equation}
For a fractional ideal $\mathfrak{a}$ of $K$, the \textit{Weber function}
$h:\mathbb{C}/\mathfrak{a}\rightarrow\mathbb{P}^1(\mathbb{C})$
is defined by
\begin{equation*}
h(z;\,\mathfrak{a})=\left\{\begin{array}{ll}
\displaystyle\frac{g_2(\mathfrak{a})^2}{\Delta(\mathfrak{a})}
\,\wp(z;\,\mathfrak{a})^2 & \textrm{if}~K=\mathbb{Q}(\sqrt{-1}),\vspace{0.1cm}\\
\displaystyle\frac{g_3(\mathfrak{a})}{\Delta(\mathfrak{a})}
\,\wp(z;\,\mathfrak{a})^3 & \textrm{if}~K=\mathbb{Q}(\sqrt{-3}),\vspace{0.1cm}\\
\displaystyle\frac{g_2(\mathfrak{a})g_3(\mathfrak{a})}{\Delta(\mathfrak{a})}
\,\wp(z;\,\mathfrak{a}) & \textrm{otherwise}.
\end{array}\right.
\end{equation*}
Then, it follows from (\ref{g2g3}) and (\ref{wp}) that
\begin{equation}\label{weight0}
h(\nu z;\,\nu\mathfrak{a})=h(z;\,\mathfrak{a})\quad
\textrm{for any}~\nu\in K^*.
\end{equation}
As a consequence of the main theorem of complex multiplication,
we get the following result due to Hasse (\cite{Hasse}).

\begin{proposition}\label{Hassework}
If $\kappa$ is a generator of the $\mathcal{O}_K$-module
$\mathfrak{n}^{-1}/\mathcal{O}_K$, then
\begin{equation*}
K_\mathfrak{n}=H_K\left(h(\kappa;\,\mathcal{O}_K)\right)=
K\left(j(\mathcal{O}_K),\,h(\kappa;\,\mathcal{O}_K)\right).
\end{equation*}
\end{proposition}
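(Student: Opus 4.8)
The plan is to realize the values in question on the CM elliptic curve $E=\mathbb{C}/\mathcal{O}_K$ and to invoke the main theorem of complex multiplication in idelic form. First I would recall that $j(E)=j(\mathcal{O}_K)$ and that $H_K=K(j(\mathcal{O}_K))$, so that adjoining $h(\kappa;\mathcal{O}_K)$ to $H_K$ is literally the same as adjoining it to $K(j(\mathcal{O}_K))$; this reconciles the two right-hand sides at once and reduces the statement to identifying $L:=H_K(h(\kappa;\mathcal{O}_K))$ with $K_\mathfrak{n}$. Next I would note that the $\mathfrak{n}$-torsion of $E$ is analytically $\mathfrak{n}^{-1}/\mathcal{O}_K$, a cyclic $\mathcal{O}_K/\mathfrak{n}$-module of which $\kappa$ is by hypothesis a generator, so that the multiplication map $\mathcal{O}_K/\mathfrak{n}\to\mathfrak{n}^{-1}/\mathcal{O}_K$, $r\mapsto r\kappa$, is an isomorphism. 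Finally I would use that the Weber function $h(\,\cdot\,;\mathcal{O}_K)$ is, by its very construction, invariant exactly under the automorphism group $\mathcal{O}_K^*$ of $E$, so that $h(z;\mathcal{O}_K)=h(z';\mathcal{O}_K)$ holds precisely when $z'=uz$ for some $u\in\mathcal{O}_K^*$.

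The heart of the argument is the transformation law for torsion points. For $\sigma\in\mathrm{Gal}(K^{\mathrm{ab}}/H_K)$, write $\sigma=[s,K]$ under the Artin map with $s$ a finite idele of $K$. Since $\sigma$ fixes $j(\mathcal{O}_K)$, the conjugate curve $E^\sigma$ is isomorphic to $E$, and Shimura's formula for the Galois action on torsion --- read off through the $\mathcal{O}_K^*$-invariant Weber function, which absorbs the ambiguity in that isomorphism --- yields $h(\kappa;\mathcal{O}_K)^\sigma=h(s^{-1}\kappa;\mathcal{O}_K)$, where $s^{-1}$ acts on $\kappa\in\mathfrak{n}^{-1}/\mathcal{O}_K$ through its reduction in $(\mathcal{O}_K/\mathfrak{n})^*$. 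Combining this with the invariance property above, and using that $\kappa$ is a generator, $\sigma$ fixes $h(\kappa;\mathcal{O}_K)$ if and only if $s^{-1}\kappa=u\kappa$ for some $u\in\mathcal{O}_K^*$, that is, if and only if $s\equiv u^{-1}\Mod{\mathfrak{n}}$ for some global unit.

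From here I would conclude by a direct class-field-theoretic identification. The displayed congruence cuts out exactly the norm subgroup whose fixed field is $K_\mathfrak{n}$: we have $\mathrm{Gal}(K_\mathfrak{n}/H_K)\cong(\mathcal{O}_K/\mathfrak{n})^*/\mathrm{im}(\mathcal{O}_K^*)$, while the stabilizer computed above shows that $\sigma\mapsto(s^{-1}\bmod\mathfrak{n})$ induces an isomorphism of $\mathrm{Gal}(L/H_K)$ onto the very same quotient. Hence $L$ and $K_\mathfrak{n}$ have the same associated subgroup of the idele class group, and class field theory forces $L=K_\mathfrak{n}$; in particular only the primes dividing $\mathfrak{n}$ ramify, as the definition of the ray class field demands.

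I expect the main obstacle to be pinning down the precise normalization in Shimura's formula and verifying that the passage to the Weber function genuinely removes the isomorphism ambiguity between $E$ and $E^\sigma$ --- equivalently, confirming that the stabilizer of $h(\kappa;\mathcal{O}_K)$ is governed by $\mathcal{O}_K^*$ and by nothing larger or smaller. Once that normalization is fixed, so that the unit group $\mathcal{O}_K^*$ is accounted for on the nose, the matching of Galois groups over $H_K$ is essentially formal.
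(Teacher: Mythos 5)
Your sketch is correct, and it is essentially the argument behind the result the paper invokes: the paper's ``proof'' is simply a citation to the Corollary to Theorem 7 in Chapter 10 of Lang's \emph{Elliptic Functions}, where exactly this combination of the idelic main theorem of complex multiplication, the $\mathcal{O}_K^*$-invariance of the Weber function, and the identification of the resulting stabilizer with the norm subgroup of $K_\mathfrak{n}$ is carried out. So you have reconstructed the standard (Hasse) proof rather than deviated from it; the only point to be careful about, as you note, is normalizing Shimura's reciprocity so that the scaling ambiguity between $E$ and $E^\sigma$ is genuinely absorbed by $h$.
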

\begin{proof}
See \cite[Corollary to Theorem 7 in Chapter 10]{Lang87}.
\end{proof}

Let $M_{1,\,2}(\mathbb{Q})$ be the set of $1\times2$ matrices over $\mathbb{Q}$. For
$i=1,\,2,\,3$ and
$\mathbf{v}=\begin{bmatrix}v_1&v_2\end{bmatrix}\in M_{1,\,2}(\mathbb{Q})-M_{1,\,2}(\mathbb{Z})$, the $i$th \textit{Fricke function} $f^{(i)}_\mathbf{v}(\tau)$ is defined on $\mathbb{H}$ by
\begin{equation*}
f^{(i)}_\mathbf{v}(\tau)=\left\{\begin{array}{ll}
\displaystyle\frac{g_2(\Lambda_\tau)g_3(\Lambda_\tau)}{\Delta(\Lambda_\tau)}
\,\wp(v_1\tau+v_2;\,\Lambda_\tau) & \textrm{if}~i=1,\vspace{0.1cm}\\
\displaystyle\frac{g_2(\Lambda_\tau)^2}{\Delta(\Lambda_\tau)}
\,\wp(v_1\tau+v_2;\,\Lambda_\tau)^2 & \textrm{if}~i=2,\vspace{0.1cm}\\
\displaystyle\frac{g_3(\Lambda_\tau)}{\Delta(\Lambda_\tau)}\,
\wp(v_1\tau+v_2;\,\Lambda_\tau)^3 & \textrm{if}~i=3
\end{array}\right.
\end{equation*}
where $\Lambda_\tau=[\tau,\,1]$.
Note by the definition (\ref{defj}) that
\begin{equation}\label{fff}
f^{(2)}_\mathbf{v}(\tau)=\frac{1}{20736}\frac{f^{(1)}_\mathbf{v}(\tau)^2}
{j(\tau)-1728}
\quad\textrm{and}\quad
f^{(3)}_\mathbf{v}(\tau)=\frac{1}{373248}\frac{f^{(1)}_\mathbf{v}(\tau)^3}
{j(\tau)(j(\tau)-1728)}.
\end{equation}
Let $\mathcal{F}_1=\mathbb{Q}(j(\tau))$ and
\begin{equation*}
\mathcal{F}_N=\mathbb{Q}\left(j(\tau),\,
f^{(i)}_\mathbf{v}(\tau)~|~
i=1,\,2,\,3~\textrm{and}~\mathbf{v}\in\frac{1}{N}M_{1,\,2}(\mathbb{Z})
-M_{1,\,2}(\mathbb{Z})\right)
\end{equation*}
where $j(\tau)=j(\Lambda_\tau)$.
Then it is well known that
$\mathcal{F}_N$ coincides with the field of meromorphic modular functions of level $N$ whose Fourier coefficients
belong to the $N$th cyclotomic field
(\cite[Proposition 6.9 (i)]{Shimura}).

\begin{proposition}\label{functionGalois}
\begin{itemize}
\item[\textup{(i)}] The field
$\mathcal{F}_N$ is a Galois extension of $\mathcal{F}_1$ whose Galois group is isomorphic to
$\mathrm{GL}_2(\mathbb{Z}/N\mathbb{Z})/\{\pm I_2\}$ and satisfies
\begin{equation}
f^{(i)}_\mathbf{v}(\tau)^{\gamma}=f^{(i)}_{\mathbf{v}\gamma}(\tau)
\quad(i=1,\,2,\,3~\textrm{and}~\gamma\in\mathrm{GL}_2(\mathbb{Z}/N\mathbb{Z})/\{\pm I_2\}).
\end{equation}
\item[\textup{(ii)}] If $f(\tau)\in\mathcal{F}_N$ and $\gamma\in
\mathrm{SL}_2(\mathbb{Z}/N\mathbb{Z})/\{\pm I_2\}$, then
\begin{equation*}
f(\tau)^\gamma=f(\widetilde{\gamma}(\tau))
\end{equation*}
where $\widetilde{\gamma}$ is any element of $\mathrm{SL}_2(\mathbb{Z})$
which maps to $\gamma$ through the reduction $\mathrm{SL}_2(\mathbb{Z})\rightarrow
\mathrm{SL}_2(\mathbb{Z}/N\mathbb{Z})/\{\pm I_2\}$.
\end{itemize}
\end{proposition}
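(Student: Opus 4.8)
The plan is to deduce both parts from the homogeneity (weight) properties of the Weierstrass data $g_2,g_3,\Delta,\wp$, together with the description of $\mathcal{F}_N$ as the level-$N$ modular function field supplied by \cite[Proposition 6.9 (i)]{Shimura}.

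First I would record the basic transformation law of the Fricke functions under $\mathrm{SL}_2(\mathbb{Z})$. For $\gamma=\begin{bmatrix}p&q\\r&s\end{bmatrix}\in\mathrm{SL}_2(\mathbb{Z})$ one has $\Lambda_{\gamma(\tau)}=(r\tau+s)^{-1}\Lambda_\tau$ and
\[
v_1\gamma(\tau)+v_2=\frac{(\mathbf{v}\gamma)_1\tau+(\mathbf{v}\gamma)_2}{r\tau+s},
\]
where $\mathbf{v}\gamma$ denotes the row vector $\mathbf{v}$ times $\gamma$. Substituting these into the homogeneity relations $g_2(\lambda\Lambda)=\lambda^{-4}g_2(\Lambda)$, $g_3(\lambda\Lambda)=\lambda^{-6}g_3(\Lambda)$, $\Delta(\lambda\Lambda)=\lambda^{-12}\Delta(\Lambda)$ and $\wp(\lambda z;\lambda\Lambda)=\lambda^{-2}\wp(z;\Lambda)$, and noting that each $f^{(i)}_\mathbf{v}$ is assembled as a weight-$0$ expression, all powers of $r\tau+s$ cancel and I obtain
\[
f^{(i)}_\mathbf{v}(\gamma(\tau))=f^{(i)}_{\mathbf{v}\gamma}(\tau)\qquad(i=1,2,3).
\]
Since $j(\tau)$ is $\mathrm{SL}_2(\mathbb{Z})$-invariant and the functions $j,f^{(i)}_\mathbf{v}$ generate $\mathcal{F}_N$, the assignment $f\mapsto f\circ\widetilde{\gamma}$ is a well-defined automorphism of $\mathcal{F}_N$ fixing $\mathcal{F}_1$; as every element of $\mathcal{F}_N$ is $\Gamma(N)$-invariant and $-I_2$ fixes $\tau$, it factors through $\mathrm{SL}_2(\mathbb{Z}/N\mathbb{Z})/\{\pm I_2\}$. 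This is precisely part (ii), and it simultaneously realizes the $\mathrm{SL}_2$-part of part (i).

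To produce the whole group in part (i) I would adjoin the horizontal automorphisms coming from the field of constants. By the identification in \cite[Proposition 6.9 (i)]{Shimura} the algebraic closure of $\mathbb{Q}$ in $\mathcal{F}_N$ is the cyclotomic field $\mathbb{Q}(\zeta_N)$, and each $\sigma_d\in\mathrm{Gal}(\mathbb{Q}(\zeta_N)/\mathbb{Q})$, $d\in(\mathbb{Z}/N\mathbb{Z})^*$, extends to an automorphism of $\mathcal{F}_N/\mathcal{F}_1$ acting on $q$-expansion coefficients. Inspecting the $q$-expansion of $f^{(i)}_\mathbf{v}$, whose coefficients lie in $\mathbb{Q}(\zeta_N)$, shows that this automorphism sends $f^{(i)}_\mathbf{v}$ to $f^{(i)}_{\mathbf{v}\,\mathrm{diag}(1,d)}$, i.e.\ it realizes the determinant-$d$ coset of $\mathrm{GL}_2(\mathbb{Z}/N\mathbb{Z})$. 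Because $\mathrm{SL}_2(\mathbb{Z}/N\mathbb{Z})$ together with the diagonal matrices $\mathrm{diag}(1,d)$ generate $\mathrm{GL}_2(\mathbb{Z}/N\mathbb{Z})$, composing the two families yields a homomorphism $\mathrm{GL}_2(\mathbb{Z}/N\mathbb{Z})/\{\pm I_2\}\to\mathrm{Aut}(\mathcal{F}_N/\mathcal{F}_1)$ satisfying the stated formula $f^{(i)}_\mathbf{v}(\tau)^\gamma=f^{(i)}_{\mathbf{v}\gamma}(\tau)$.

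Finally I would verify that this homomorphism is an isomorphism, establishing the Galois property. Injectivity follows from the fact that distinct classes $\pm\mathbf{v}$ give distinct Fricke functions: if $\gamma$ acts trivially then $\mathbf{v}\gamma\equiv\pm\mathbf{v}\pmod{\mathbb{Z}^2}$ for all $\mathbf{v}$, which forces $\gamma\equiv\pm I_2$. The Galois conclusion then follows by matching orders against the classical degree formula $[\mathcal{F}_N:\mathcal{F}_1]=|\mathrm{GL}_2(\mathbb{Z}/N\mathbb{Z})/\{\pm I_2\}|$ for the level-$N$ modular function field. I expect the genuine obstacle to be the horizontal step: pinning down the constant field as exactly $\mathbb{Q}(\zeta_N)$ and matching the Fourier-coefficient (cyclotomic) action with the determinant coset, since the vertical $\mathrm{SL}_2$-computation is only routine homogeneity bookkeeping. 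Most of this structural input is classical and can be imported from \cite[Chapter 6]{Shimura}.
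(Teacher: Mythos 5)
Your argument is correct and is essentially the standard proof of Shimura's Theorem~6.6 and (6.1.3), which is exactly what the paper cites here without supplying any proof of its own: the weight-$0$ homogeneity computation for the $\mathrm{SL}_2$-part, the cyclotomic action on Fourier coefficients realizing the $\mathrm{diag}(1,d)$ cosets, injectivity via the separation of $\pm\mathbf{v}$ classes, and the order count against $[\mathcal{F}_N:\mathcal{F}_1]$. The only compressed points --- checking that the two partial actions glue to a well-defined homomorphism on all of $\mathrm{GL}_2(\mathbb{Z}/N\mathbb{Z})/\{\pm I_2\}$, and the degree formula $[\mathcal{F}_N:\mathcal{F}_1]=|\mathrm{GL}_2(\mathbb{Z}/N\mathbb{Z})/\{\pm I_2\}|$ --- are precisely the structural inputs you explicitly import from Shimura, so nothing is missing relative to the paper's own treatment.
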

\begin{proof}
\begin{itemize}
\item[(i)] See \cite[Theorem 6.6]{Shimura}.
\item[(ii)] See \cite[(6.1.3)]{Shimura}.
\end{itemize}
\end{proof}

Now, consider the index set
\begin{equation*}
V_N=\left\{\mathbf{v}=\begin{bmatrix}v_1&v_2\end{bmatrix}\in M_{1,\,2}(\mathbb{Q})~|~\textrm{$N$ is the smallest positive integer so that}~N\mathbf{v}\in M_{1,\,2}(\mathbb{Z})\right\}.
\end{equation*}
Kubert and Lang first defined
in \cite[$\S$2.1]{K-L}
a \textit{Fricke family} of level $N$ to be
a family $\{h_\mathbf{v}(\tau)\}_{\mathbf{v}\in V_N}$
of functions in $\mathcal{F}_N$ satisfying
\begin{itemize}
\item[(F1)] $h_\mathbf{u}(\tau)=h_\mathbf{v}(\tau)$ if $\mathbf{u}
\equiv\pm\mathbf{v}\Mod{M_{1,\,2}(\mathbb{Z})}$,
\item[(F2)] $h_\mathbf{v}(\tau)^\gamma=h_\mathbf{v}\gamma$
for all $\gamma\in\mathrm{GL}_2(\mathbb{Z}/N\mathbb{Z})/\{\pm I_2\}\simeq\mathrm{Gal}(\mathcal{F}_N/\mathcal{F}_1)$.
\end{itemize}
For example, for each $i=1,\,2,\,3$,
the family $\{f^{(i)}_\mathbf{v}(\tau)\}_{\mathbf{v}\in V_N}$
is a Fricke family of level $N$ by Proposition \ref{functionGalois} (i).
If we let $\mathrm{Fr}(N)$ be the set of all
Fricke families of level $N$, then it is natural for us to regard
it as a field under the operations
\begin{eqnarray*}
\{h_\mathbf{v}(\tau)\}_\mathbf{v}+
\{k_\mathbf{v}(\tau)\}_\mathbf{v}&=&
\{(h_\mathbf{v}+k_\mathbf{v})(\tau)\}_\mathbf{v},\\
\{h_\mathbf{v}(\tau)\}_\mathbf{v}\cdot
\{k_\mathbf{v}(\tau)\}_\mathbf{v}&=&
\{(h_\mathbf{v}k_\mathbf{v})(\tau)\}_\mathbf{v}.
\end{eqnarray*}
And, let $\mathcal{F}^1_N(\mathbb{Q})$ be the field of meromorphic modular functions for the congruence subgroup
\begin{equation*}
\Gamma^1(N)=\left\{\gamma\in\mathrm{SL}_2(\mathbb{Z})~|~
\gamma\equiv\begin{bmatrix}1&0\\
\mathrm{*}&1\end{bmatrix}\Mod{NM_2(\mathbb{Z})}
\right\}
\end{equation*}
with rational Fourier coefficients. Then it was shown by Eum and Shin that
\begin{equation*}
\mathcal{F}^1_N(\mathbb{Q})=
\mathbb{Q}\left(j(\tau),\,f^{(i)}_{\left[
\begin{smallmatrix}1/N&0\end{smallmatrix}\right]}(\tau)~|~
i=1,\,2,\,3\right)
\end{equation*}
and
$\mathrm{Fr}(N)$
is indeed isomorphic to $\mathcal{F}^1_N(\mathbb{Q})$ through
the map
\begin{equation}\label{familyisomorphism}
\mathrm{Fr}(N)\stackrel{\sim}{\rightarrow}\mathcal{F}^1_N(\mathbb{Q}),
\quad
\{h_\mathbf{v}(\tau)\}_\mathbf{v}\mapsto
h_{\left[\begin{smallmatrix}1/N&0\end{smallmatrix}\right]}(\tau)
\end{equation}
(\cite[Theorem 4.3 and Proposition 6.1]{E-S} and (\ref{fff})).
\par
Let $\{h_\mathbf{v}(\tau)\}_\mathbf{v}\in\mathrm{Fr}(N)$ and
$C\in\mathcal{C}(\mathfrak{n})$. Take any integral
ideal $\mathfrak{c}$ in the class $C$, and let $\omega_1,\,
\omega_2\in K^*$ such that
\begin{equation*}
\mathfrak{n}\mathfrak{c}^{-1}=[\omega_1,\,\omega_2]\quad
\textrm{and}\quad \omega=\frac{\omega_1}{\omega_2}\in\mathbb{H}.
\end{equation*}
Since $N\in\mathfrak{n}\mathfrak{c}^{-1}$, we get
\begin{equation*}
N=r\omega_1+s\omega_2\quad\textrm{for some}~r,\,s\in\mathbb{Z}.
\end{equation*}
Now, we define the \textit{Fricke invariant} $h(C)$ by
\begin{equation*}
h(C)=h_{\left[\begin{smallmatrix}r/N&s/N\end{smallmatrix}\right]}
(\omega)
\end{equation*}
if it is finite.

\begin{proposition}\label{transformation}
With the notations as above, $h(C)$ depends only on the class $C$,
not on the choice of $\mathfrak{c}$, $\omega_1$ and $\omega_2$.
It belongs to $K_\mathfrak{n}$ and satisfies
\begin{equation*}
h(C)^{\sigma_\mathfrak{n}(C')}=h(CC')\quad(C'
\in\mathcal{C}(\mathfrak{n})),
\end{equation*}
where $\sigma_\mathfrak{n}$ is the
Artin reciprocity map for modulus $\mathfrak{n}$.
\end{proposition}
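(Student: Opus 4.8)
The plan is to separate the three assertions: I would establish well-definedness by a direct lattice computation, and extract both the membership $h(C)\in K_\mathfrak{n}$ and the transformation law from the main theorem of complex multiplication. For well-definedness, fix $\mathfrak{c}\in C$ and a basis $\{\omega_1,\omega_2\}$ of $\mathfrak{n}\mathfrak{c}^{-1}$. Replacing the basis by $\gamma\begin{bmatrix}\omega_1\\\omega_2\end{bmatrix}$ with $\gamma\in\mathrm{SL}_2(\mathbb{Z})$ sends $\omega\mapsto\gamma(\omega)$, while $N=r\omega_1+s\omega_2$ becomes $N=r'\omega_1'+s'\omega_2'$ with $[r'\ s']=[r\ s]\gamma^{-1}$, so $\mathbf{v}=[r/N\ s/N]\mapsto\mathbf{v}\gamma^{-1}$. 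Combining (F2) with Proposition \ref{functionGalois}(ii) gives $h_{\mathbf{w}}(\gamma(\tau))=h_{\mathbf{w}\gamma}(\tau)$, whence $h_{\mathbf{v}\gamma^{-1}}(\gamma(\omega))=h_{\mathbf{v}}(\omega)$ and the value is independent of the basis. Replacing $\mathfrak{c}$ by $\nu\mathfrak{c}$ with $\nu\equiv^*1\Mod{\mathfrak{n}}$ and using the basis $\{\nu^{-1}\omega_1,\nu^{-1}\omega_2\}$ keeps $\omega$ fixed; a place-by-place check (using that $\mathfrak{c}$ is prime to $\mathfrak{n}$ and $\nu\equiv^*1$) shows $\nu-1\in\mathfrak{n}\mathfrak{c}^{-1}$, so the new coefficients satisfy $r'\equiv r$, $s'\equiv s\Mod{N}$ and (F1) leaves the value unchanged. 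Thus $h(C)$ depends only on $C$.

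The key reduction is the identity $v_1\omega+v_2=(r\omega_1+s\omega_2)/(N\omega_2)=1/\omega_2$ together with $[\omega,1]=\omega_2^{-1}\mathfrak{n}\mathfrak{c}^{-1}$. For the Fricke family attached to the unit group of $K$ — namely $\{f^{(i_K)}_\mathbf{v}\}$ with $i_K=1,2,3$ according as $K\neq\mathbb{Q}(\sqrt{-1}),\mathbb{Q}(\sqrt{-3})$, $K=\mathbb{Q}(\sqrt{-1})$, or $K=\mathbb{Q}(\sqrt{-3})$ — the weight-zero relation (\ref{weight0}) rewrites the invariant as the value of the Weber function of $K$,
\begin{equation*}
h(C)=h(\tfrac{1}{\omega_2};\,\omega_2^{-1}\mathfrak{n}\mathfrak{c}^{-1})=h(1;\,\mathfrak{n}\mathfrak{c}^{-1}),
\end{equation*}
at the $\mathfrak{n}$-torsion point $1$ of $\mathbb{C}/\mathfrak{n}\mathfrak{c}^{-1}$. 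I would then apply the main theorem of complex multiplication (\cite[Chapter 10]{Lang87}; cf. \cite[Theorem 6.31]{Shimura}): choosing an integral $\mathfrak{b}\in C'$ prime to $\mathfrak{n}$ and an idele representing $\mathfrak{b}$ whose components at the primes dividing $\mathfrak{n}$ are trivial, $\sigma_\mathfrak{n}(C')$ sends $h(1;\,\mathfrak{n}\mathfrak{c}^{-1})$ to $h(t';\,\mathfrak{b}^{-1}\mathfrak{n}\mathfrak{c}^{-1})$, where $t'$ is the image of $1$ under that idele. Since $1$ is annihilated by $\mathfrak{n}$ and the idele is trivial modulo $\mathfrak{n}$, we get $t'\equiv1$, so the right-hand side is $h(1;\,\mathfrak{n}(\mathfrak{c}\mathfrak{b})^{-1})=h(CC')$. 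Taking $C'$ represented by ideals in $P_{K,\,1}(\mathfrak{n})$ shows that $h(C)$ is fixed by the Artin symbols generating $\mathrm{Gal}(K^{\mathrm{ab}}/K_\mathfrak{n})$, hence lies in $K_\mathfrak{n}$.

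To pass from the Weber family to an arbitrary $\{h_\mathbf{v}\}\in\mathrm{Fr}(N)$, I would use that $\mathrm{Fr}(N)\cong\mathcal{F}^1_N(\mathbb{Q})$ by (\ref{familyisomorphism}), that $\mathcal{F}^1_N(\mathbb{Q})$ is generated over $\mathbb{Q}(j)$ by a single Fricke function via (\ref{fff}), and that evaluation $\{h_\mathbf{v}\}\mapsto h(C)$ is compatible with the field operations and with $\sigma_\mathfrak{n}(C')$. For the constant family $\{j\}$ the transformation law is the classical statement (\ref{CG}), $j(\mathfrak{n}\mathfrak{c}^{-1})^{\sigma_\mathfrak{n}(C')}=j(\mathfrak{b}^{-1}\mathfrak{n}\mathfrak{c}^{-1})$. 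For a generic $K$ one has $j(\omega)\neq0,1728$, so no indeterminacy arises and the formula propagates from $\{j\}$ and the Weber family to all of $\mathrm{Fr}(N)$. The two fields $K=\mathbb{Q}(\sqrt{-1}),\mathbb{Q}(\sqrt{-3})$, where $j(\omega)=1728$ or $0$ forces the non-Weber Fricke functions to vanish at the CM points, I would instead treat by Shimura's reciprocity law in the index-vector form $h_\mathbf{v}(\omega)^{\sigma_\mathfrak{n}(C')}=h_{\mathbf{v}g}(\omega)$ coming from Proposition \ref{functionGalois}; since these fields have class number one, every $\mathfrak{b}$ is principal, the evaluation point for $CC'$ can be taken to be $\omega$ itself, and $h_{\mathbf{v}g}(\omega)=h(CC')$ then follows from the well-definedness already proved.

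The main obstacle is the correct normalization in the main theorem of complex multiplication: one must select the idele of $\mathfrak{b}$ compatibly with the modulus $\mathfrak{n}$ so that the distinguished torsion point $1$ is carried to $1$ on the isogenous curve $\mathbb{C}/\mathfrak{b}^{-1}\mathfrak{n}\mathfrak{c}^{-1}$, and then match the resulting value with the definition of $h(CC')$. Routing the argument through the invariant Weber reformulation is what makes this matching transparent for generic $K$, where the CM point genuinely moves (as $\mathfrak{b}$ need not be principal) so that a naive comparison of index vectors at a single fixed point would not suffice; the two exceptional fields are then disposed of by the fixed-point form of reciprocity, using that their class groups are trivial.
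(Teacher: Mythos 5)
The paper gives no argument for this proposition: its entire ``proof'' is the citation \cite[Chapter 11, Theorem 1.1]{K-L}. What you have written is essentially a reconstruction of the proof that lies behind that citation. Your two well-definedness computations are correct as stated: the basis change $\mathbf{v}\mapsto\mathbf{v}\gamma^{-1}$, $\omega\mapsto\gamma(\omega)$ is exactly compensated by $h_{\mathbf{w}}(\gamma(\tau))=h_{\mathbf{w}\gamma}(\tau)$, and the verification that $\nu-1\in\mathfrak{n}\mathfrak{c}^{-1}$ (local at $\mathfrak{p}\mid\mathfrak{n}$ from $\nu\equiv^*1$, local elsewhere from integrality of $\mathfrak{c}$ and $\nu\mathfrak{c}$) is precisely what is needed to get $r'\equiv r$, $s'\equiv s\Mod{N}$. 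The reduction $v_1\omega+v_2=1/\omega_2$, $[\omega,\,1]=\omega_2^{-1}\mathfrak{n}\mathfrak{c}^{-1}$, hence $f^{(i_K)}(C)=h(1;\,\mathfrak{n}\mathfrak{c}^{-1})$, is the same normalization the paper itself records in Remark \ref{h(1)} (iii), and the application of the main theorem of complex multiplication with an idele trivial at the primes dividing $\mathfrak{n}$ is the standard Hasse--Ramachandra argument; it yields both the transformation law and, by specializing $C'$ to the identity coset, membership in $K_\mathfrak{n}$.

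The one step you should tighten is the propagation from the families $\{j\}$ and $\{f^{(i_K)}_\mathbf{v}\}$ to an arbitrary $\{h_\mathbf{v}\}\in\mathrm{Fr}(N)$. Writing $h=P(j,f^{(1)})/Q(j,f^{(1)})$ and specializing at the CM point is legitimate only when $Q$ does not vanish there; excluding $j(\omega)=0,1728$ removes the degeneracy in (\ref{fff}) but does not rule out that $Q\bigl(j(\omega),f^{(1)}_{\mathbf{v}}(\omega)\bigr)=0$ while $h_\mathbf{v}(\omega)$ is finite, in which case the rational expression cannot be evaluated termwise. The standard repair --- and what the cited source in effect does --- is to apply the reciprocity law in the form of \cite[Theorem 6.31]{Shimura} directly to every function of $\mathcal{F}_N$ that is finite at the point, not merely to a generating set; equivalently, one chooses the representation $P/Q$ so that $Q$ is a unit in the local ring of the normalized modular curve at that point, which is possible because that local ring is a localization of the integral closure of $\mathbb{Q}[j,f^{(1)}_{\mathbf{v}}]$. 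With that adjustment your argument is complete, and it is the same argument the paper delegates to \cite{K-L} rather than a genuinely different route.
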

\begin{proof}
See \cite[Theorem 1.1 in Chapter 11]{K-L}.
\end{proof}

\begin{remark}\label{h(1)}
\begin{itemize}
\item[(i)] The Fricke invariant $h(C)$ is a
generalization of the Siegel-Ramachandra invariant
(\cite{Ramachandra} and \cite{Siegel}).
\item[(ii)] When $C_0$ is the identity class of $\mathcal{C}(\mathfrak{n})$, one can take $\mathfrak{c}=\mathcal{O}_K$ and so
\begin{equation*}
\mathfrak{n}\mathfrak{c}^{-1}=\mathfrak{n}=
[\xi_1,\,\xi_2]\quad\textrm{and}
\quad N=0\cdot\xi_1+1\cdot\xi_2.
\end{equation*}
Thus we have
\begin{eqnarray*}
h(C_0)&=&h_{\left[\begin{smallmatrix}
0&1/N
\end{smallmatrix}\right]}(\xi)
\quad\textrm{where}~\xi=\frac{\xi_1}{\xi_2}\\
&=&h_{\left[\begin{smallmatrix}
1/N&0
\end{smallmatrix}\right]}(\tau)^S|_{\tau=\xi}\quad\textrm{by
(F1) and (F2) with}~S=\begin{bmatrix}0&-1\\1&0\end{bmatrix}\\
&=&h_{\left[\begin{smallmatrix}
1/N&0
\end{smallmatrix}\right]}(S(\xi))\quad\textrm{by
Proposition \ref{functionGalois} (ii)}\\
&=&h_{\left[\begin{smallmatrix}
1/N&0
\end{smallmatrix}\right]}(-1/\xi).
\end{eqnarray*}
\item[(iii)] In particular, consider the Fricke family
$\{f^{(i)}_\mathbf{v}(\tau)\}_{\mathbf{v}\in V_N}$
for $i=1,\,2,\,3$ .
We then derive that
\begin{eqnarray*}
f^{(i)}(C_0)&=&f^{(i)}_{\left[\begin{smallmatrix}0&1/N\end{smallmatrix}\right]}
(\xi)\quad\textrm{by (ii)}\\
&=&\left\{\begin{array}{ll}
\displaystyle\frac{g_2([\xi_1,\,\xi_2])g_3([\xi_1,\,\xi_2])}{\Delta([\xi_1,\,\xi_2])}
\,\wp\left(\frac{\xi_2}{N};\,[\xi_1,\,\xi_2]\right) & \textrm{if}~i=1,\vspace{0.1cm}\\
\displaystyle\frac{g_2([\xi_1,\,\xi_2])^2}{\Delta([\xi_1,\,\xi_2])}
\,\wp\left(\frac{\xi_2}{N};\,[\xi_1,\,\xi_2]\right)^2 & \textrm{if}~i=2,\vspace{0.1cm}\\
\displaystyle\frac{g_3([\xi_1,\,\xi_2])}{\Delta([\xi_1,\,\xi_2])}\,
\wp\left(\frac{\xi_2}{N};\,[\xi_1,\,\xi_2]\right)^3 & \textrm{if}~i=3,
\end{array}\right.\quad\textrm{by (\ref{weight0})}\\
&=&h(1;\,\mathfrak{n})\quad\textrm{becase}~\xi_2=N.
\end{eqnarray*}
\end{itemize}
\end{remark}

\begin{proposition}\label{f(C)}
If $i=|\mathcal{O}_K^*|/2$, then
$K_\mathfrak{n}$ is generated by
$f^{(i)}(C)$ over $H_K$ for any $C\in\mathcal{C}(\mathfrak{n})$.
\end{proposition}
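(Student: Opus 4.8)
The plan is to combine Hasse's theorem (Proposition \ref{Hassework}) with the Galois-equivariance of the Fricke invariants (Proposition \ref{transformation}), the bridge being the observation that the hypothesis $i=|\mathcal{O}_K^*|/2$ makes $f^{(i)}_\mathbf{v}$ coincide with the Weber function. Indeed, comparing the three defining cases of $h(z;\mathfrak{a})$ with the three Fricke functions shows that for $i=|\mathcal{O}_K^*|/2$ (so $i=2$ for $K=\mathbb{Q}(\sqrt{-1})$, $i=3$ for $K=\mathbb{Q}(\sqrt{-3})$, and $i=1$ otherwise) one has $f^{(i)}_\mathbf{v}(\tau)=h(v_1\tau+v_2;\Lambda_\tau)$. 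Thus every $f^{(i)}(C)$ is a genuine Weber value, and the weight-$0$ relation (\ref{weight0}) shows it is invariant under $z\mapsto\zeta z$ for $\zeta\in\mathcal{O}_K^*$; this unit-invariance is exactly what forces the value to generate all of $K_\mathfrak{n}$ rather than a proper subfield.

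Next I would reduce to the identity class $C_0$. By Proposition \ref{transformation}, $f^{(i)}(C)\in K_\mathfrak{n}$ and $f^{(i)}(C)=f^{(i)}(C_0)^{\sigma_\mathfrak{n}(C)}$. Since $H_K/K$ and $K_\mathfrak{n}/K$ are abelian, $\sigma_\mathfrak{n}(C)$ stabilizes both $H_K$ and $K_\mathfrak{n}$, whence $H_K(f^{(i)}(C))=\sigma_\mathfrak{n}(C)\bigl(H_K(f^{(i)}(C_0))\bigr)$. Hence it suffices to prove $H_K(f^{(i)}(C_0))=K_\mathfrak{n}$.

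For the identity class, Remark \ref{h(1)}(iii) gives $f^{(i)}(C_0)=h(1;\mathfrak{n})$, the Weber value of $1\bmod\mathfrak{n}$, which is a primitive $\mathfrak{n}$-torsion point of $\mathbb{C}/\mathfrak{n}$ (its annihilator is $\mathfrak{n}$ since $\mathfrak{n}\cap\mathbb{Z}=N\mathbb{Z}$). I would then transfer Hasse's theorem from the lattice $\mathcal{O}_K$ to the lattice $\mathfrak{n}$: choose $\sigma\in\mathrm{Gal}(H_K/K)$ with $\sigma(j(\mathcal{O}_K))=j(\mathfrak{n})$---possible by (\ref{CG}) with the class $[\mathfrak{n}]^{-1}$---lift it to $\overline{K}$, and apply the main theorem of complex multiplication. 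Hasse's generator $h(\kappa;\mathcal{O}_K)$, with $\kappa$ a generator of $\mathfrak{n}^{-1}/\mathcal{O}_K$, is carried to the Weber value of a primitive $\mathfrak{n}$-torsion point of the conjugate curve $\mathbb{C}/\mathfrak{n}$; since $K_\mathfrak{n}/K$ is Galois, $\sigma$ fixes $H_K$ and $K_\mathfrak{n}$ setwise, so $K_\mathfrak{n}=\sigma(K_\mathfrak{n})=H_K\bigl(h(t;\mathfrak{n})\bigr)$ for a primitive $\mathfrak{n}$-torsion $t$. Finally the unit-invariance above identifies the primitive $\mathfrak{n}$-torsion Weber values on $\mathbb{C}/\mathfrak{n}$ with $(\mathcal{O}_K/\mathfrak{n})^*/\mathrm{im}(\mathcal{O}_K^*)$, on which $\mathrm{Gal}(K_\mathfrak{n}/H_K)$ acts simply transitively (again Proposition \ref{transformation}, restricted to $\ker(\mathcal{C}(\mathfrak{n})\to\mathcal{C}(\mathcal{O}_K))$); so each of them, in particular $h(1;\mathfrak{n})$, generates $K_\mathfrak{n}$ over $H_K$.

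The main obstacle is this last transfer. Hasse's theorem is stated only for the lattice $\mathcal{O}_K$, whereas our invariant lives on $\mathbb{C}/\mathfrak{n}$; when $\mathfrak{n}$ is non-principal these lattices are not homothetic, so no weight-$0$ identity links $h(1;\mathfrak{n})$ to $h(\kappa;\mathcal{O}_K)$ directly---they are only Galois conjugate. Making this conjugation precise (that an element realizing $j(\mathcal{O}_K)\mapsto j(\mathfrak{n})$ sends a primitive $\mathfrak{n}$-torsion Weber value to one on $\mathbb{C}/\mathfrak{n}$ with the correct normalization) is exactly where the main theorem of complex multiplication enters; equivalently, one must show the stabilizer of $f^{(i)}(C_0)$ in $\mathrm{Gal}(K_\mathfrak{n}/H_K)$ is trivial, i.e. that $C\mapsto f^{(i)}(C)$ is injective on $\ker(\mathcal{C}(\mathfrak{n})\to\mathcal{C}(\mathcal{O}_K))$. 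If, as is likely, the source cited for Proposition \ref{Hassework} (Lang, Chapter 10) states Hasse's theorem for an arbitrary proper $\mathcal{O}_K$-ideal lattice whose $j$-invariant lies in $H_K$, one may instead apply it verbatim to $\mathfrak{n}$ and bypass the conjugation altogether.
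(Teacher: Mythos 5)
Your proposal shares its skeleton with the paper's proof: the reduction to the identity class via Proposition \ref{transformation} and the Galois-stability of $H_K$ and $K_\mathfrak{n}$ is verbatim the paper's second step, and the identification $f^{(i)}(C_0)=h(1;\,\mathfrak{n})$ via Remark \ref{h(1)} (iii) is also the paper's. Where you genuinely diverge is at the step you rightly flag as the crux, namely linking Hasse's theorem on $\mathbb{C}/\mathcal{O}_K$ to the Weber value $h(1;\,\mathfrak{n})$ on $\mathbb{C}/\mathfrak{n}$. You propose a conjugation realizing $j(\mathcal{O}_K)\mapsto j(\mathfrak{n})$ plus the main theorem of complex multiplication, and you leave that transfer (equivalently, triviality of the stabilizer of $f^{(i)}(C_0)$ in $\mathrm{Gal}(K_\mathfrak{n}/H_K)$) unexecuted. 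The paper never performs any such transfer: for $K\neq\mathbb{Q}(\sqrt{-1}),\,\mathbb{Q}(\sqrt{-3})$ it simply cites \cite[Theorem 3.2]{J-K-S16}, and it argues directly only for the two exceptional fields --- where the difficulty you worry about evaporates, since those fields have class number one, so $\mathfrak{n}=\nu\mathcal{O}_K$ is principal, $\kappa=\nu^{-1}$ generates $\mathfrak{n}^{-1}/\mathcal{O}_K$, and the weight-zero identity (\ref{weight0}) gives $h(\nu^{-1};\,\mathcal{O}_K)=h(1;\,\nu\mathcal{O}_K)=h(1;\,\mathfrak{n})$ with no Galois conjugation at all. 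So your route is workable but amounts to re-proving, for general $K$, the content of the outsourced reference; the observation you missed is that the only cases needing a direct argument are precisely those where $\mathfrak{n}$ is automatically principal. If you do want a self-contained closure of your remaining gap, the cleanest is the counting argument you gesture at in your final sentences: for $C'=[\nu'\mathcal{O}_K]$ in $\ker(\mathcal{C}(\mathfrak{n})\to\mathcal{C}(\mathcal{O}_K))$ one computes $f^{(i)}(C')=h(\nu';\,\mathfrak{n})$ from the definitions, the Weber function is injective on $(\mathbb{C}/\mathfrak{n})/\mathcal{O}_K^*$ because it is the quotient by $\mathrm{Aut}$ (here the hypothesis $i=|\mathcal{O}_K^*|/2$ enters), so these $[K_\mathfrak{n}:H_K]$ values are pairwise distinct and are permuted transitively by $\mathrm{Gal}(K_\mathfrak{n}/H_K)$ by Proposition \ref{transformation}, forcing the stabilizer of $f^{(i)}(C_0)$ to be trivial.
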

\begin{proof}
For the case where $K$ is different from $\mathbb{Q}(\sqrt{-1})$ and $\mathbb{Q}(\sqrt{-3})$,
see \cite[Theorem 3.2]{J-K-S16}.
\par
Now, let $K=\mathbb{Q}(\sqrt{-1})$ or $\mathbb{Q}(\sqrt{-3})$. Since the class number of $K$ is $1$,
we have $\mathfrak{n}=\nu\mathcal{O}_K$ for some
$\nu\in K^*$ and so $\mathfrak{n}^{-1}=\nu^{-1}\mathcal{O}_K$. Thus we obtain by Proposition \ref{Hassework}, (\ref{weight0}) and Remark \ref{h(1)} (iii) that
\begin{equation*}
K_\mathfrak{n}=H_K\left(h(\nu^{-1};\,\mathcal{O}_K)\right)
=H_K\left(h(1;\,\nu\mathcal{O}_K)\right)
=H_K\left(h(1;\,\mathfrak{n})\right)=H_K\left(f^{(i)}(C_0)\right).
\end{equation*}
Since $K_\mathfrak{n}$ is an abelian extension of $K$,
we further achieve
\begin{eqnarray*}
K_\mathfrak{n}&=&H_K\left(f^{(i)}(C_0)^{\sigma_\mathfrak{n}(C)}\right)
\\
&=&H_K\left(f^{(i)}(C_0C)\right)\quad
\textrm{by Proposition \ref{transformation}}\\
&=&H_K\left(f^{(i)}(C)\right).
\end{eqnarray*}
\end{proof}

\begin{corollary}\label{specialization}
We have
\begin{equation*}
K_\mathfrak{n}=
K\left(h(-1/\xi)~|~h(\tau)\in\mathcal{F}^1_N(\mathbb{Q})~\textrm{is finite at}~-1/\xi\right)\quad\textrm{with}~\xi=\frac{\xi_1}{\xi_2}=\frac{a_1\tau_K+a_2}{N}.
\end{equation*}
\end{corollary}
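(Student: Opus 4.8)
The plan is to show that the field $K\left(h(-1/\xi)\right)$, where $h$ ranges over all functions in $\mathcal{F}^1_N(\mathbb{Q})$ that are finite at $-1/\xi$, coincides with $K_\mathfrak{n}$ by establishing two inclusions. The point $\xi = \xi_1/\xi_2 = (a_1\tau_K + a_2)/N$ is exactly the ratio of the canonical basis elements of $\mathfrak{n}$ from (\ref{xat}), so by Remark \ref{h(1)} (ii) the specialization $h_{[1/N\ 0]}(-1/\xi)$ recovers the Fricke invariant $h(C_0)$ at the identity class. Under the isomorphism (\ref{familyisomorphism}) between $\mathrm{Fr}(N)$ and $\mathcal{F}^1_N(\mathbb{Q})$, the coordinate function $h(\tau) = h_{[1/N\ 0]}(\tau)$ corresponds to the Fricke family $\{h_\mathbf{v}(\tau)\}_\mathbf{v}$, and so evaluating $h$ at $-1/\xi$ produces precisely the invariant $h(C_0)$ for that family.

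First I would prove the inclusion $K(h(-1/\xi)) \subseteq K_\mathfrak{n}$. By Proposition \ref{transformation}, every Fricke invariant $h(C_0)$ lies in $K_\mathfrak{n}$; combining this with the identification just described, each specialization $h(-1/\xi)$ with $h \in \mathcal{F}^1_N(\mathbb{Q})$ finite at $-1/\xi$ equals such an invariant and hence belongs to $K_\mathfrak{n}$. This gives the easy containment.

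For the reverse inclusion $K_\mathfrak{n} \subseteq K(h(-1/\xi))$, the strategy is to exhibit a single generator of $K_\mathfrak{n}$ over $K$ that arises as one of these specializations. By Proposition \ref{f(C)}, taking $i = |\mathcal{O}_K^*|/2$, the field $K_\mathfrak{n}$ is generated over $H_K$ by $f^{(i)}(C_0)$. Since $H_K = K(j(\mathcal{O}_K)) = K(j(\tau_K))$ and $j(\tau) \in \mathcal{F}^1_N(\mathbb{Q})$ is finite at $-1/\xi$ (as $-1/\xi$ is a CM point where $j$ takes the algebraic value $j(\mathcal{O}_K)$, noting $\tau_K$ and $-1/\xi$ generate the same order), the $j$-invariant contributes $H_K$ to the right-hand field. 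Then $f^{(i)}(C_0)$ is itself the specialization at $-1/\xi$ of the function $f^{(i)}_{[1/N\ 0]}(\tau) \in \mathcal{F}^1_N(\mathbb{Q})$ by Remark \ref{h(1)} (ii)–(iii). Hence both $H_K$ and $f^{(i)}(C_0)$ sit inside $K(h(-1/\xi))$, so $K_\mathfrak{n} = H_K(f^{(i)}(C_0)) \subseteq K(h(-1/\xi))$, completing the argument.

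The main obstacle I anticipate is the bookkeeping around finiteness: one must ensure that the generators $j(\tau)$ and $f^{(i)}_{[1/N\ 0]}(\tau)$ are genuinely finite at $-1/\xi$ so that their specializations are defined and land in $K(h(-1/\xi))$, and conversely that restricting to functions finite at $-1/\xi$ does not shrink the generated field below $K_\mathfrak{n}$. This is handled by observing that $-1/\xi$ is a CM point of the order $\mathcal{O}_K$ (equivalently $\xi$ generates $\mathfrak{n}^{-1}$-related data), at which $j$ is finite and the relevant Fricke functions have no pole, together with the fact that any function regular at $-1/\xi$ specializes compatibly under the Shimura reciprocity framework already invoked in Proposition \ref{transformation}.
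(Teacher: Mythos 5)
Your proposal is correct and follows essentially the same route as the paper: both reduce the statement to the identity $K_\mathfrak{n}=K\bigl(h(C_0)\bigr)$ via Propositions \ref{transformation} and \ref{f(C)} (with $j(\tau)$ supplying $H_K$), then translate $h(C_0)$ into the specialization $h_{\left[\begin{smallmatrix}1/N&0\end{smallmatrix}\right]}(-1/\xi)$ by Remark \ref{h(1)} (ii) and the isomorphism (\ref{familyisomorphism}). The only quibble is that $j(-1/\xi)$ equals $j(\mathfrak{n})$ rather than $j(\mathcal{O}_K)$ when $\mathfrak{n}$ is not principal, but this does not affect the argument since $j(\mathfrak{n})$ still generates $H_K$ over $K$.
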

\begin{proof}
Since $\{j(\tau)\}_{\mathbf{v}\in V_N}$ is also
a Fricke family of level $N$ and $j(C_0)=j(\mathcal{O}_K)$, we get by Propositions
\ref{transformation} and \ref{f(C)} that
\begin{equation}\label{KKhC}
K_\mathfrak{n}=K\left(h(C_0)~|~\{h_\mathbf{v}(\tau)\}_\mathbf{v}\in
\mathrm{Fr}(N)~\textrm{such that $h(C_0)$ is finite}\right).
\end{equation}
We then deduce that
\begin{eqnarray*}
K_\mathfrak{n}
&=&K\left(h_{\left[\begin{smallmatrix}1/N&0\end{smallmatrix}\right]}
(-1/\xi)~|~
\{h_\mathbf{v}(\tau)\}_\mathbf{v}\in\mathrm{Fr}(N)~
\textrm{with}~h_{\left[\begin{smallmatrix}1/N&0\end{smallmatrix}\right]}(\tau)~\textrm{finite at}~-1/\xi\right)
\quad\textrm{by Remark \ref{h(1)} (ii)}\\
&=&K
\left(h(-1/\xi)~|~h(\tau)\in\mathcal{F}^1_N(\mathbb{Q})~\textrm{is finite at}~-1/\xi\right)\quad\textrm{by the isomorphism given in (\ref{familyisomorphism})}.
\end{eqnarray*}
\end{proof}

\section {Extended form class groups as Galois groups}

By the class field theory we know that
$\mathcal{C}(\mathfrak{n})$
is isomorphic to $\mathrm{Gal}(K_\mathfrak{n}/K)$
through the Artin map for modulus $\mathfrak{n}$.
Thus, the extended form class group $\mathcal{Q}_N(d_K)/\sim_\mathfrak{n}$
is also isomorphic to $\mathrm{Gal}(K_\mathfrak{n}/K)$.
In this section, we shall present
an explicit isomorphism of
$\mathcal{Q}_N(d_K)/\sim_\mathfrak{n}$ onto
$\mathrm{Gal}(K_\mathfrak{n}/K)$ in terms
of Fricke invariants.
\par
For each $Q=ax^2+bxy+cy^2\in\mathcal{Q}_N(d_K)$
there is a unique integer $d_Q$
satisfying
\begin{equation}\label{dQ}
d_Q\equiv-a_1\left(\frac{b+b_K}{2}\right)+a_2\Mod{N},\quad
d_Q\equiv0\Mod{a},\quad
0\leq d_Q+a_1\left(\frac{b+b_K}{2}\right)<Na
\end{equation}
by the fact $\gcd(N,\,a)=1$ and the Chinese remainder theorem.

\begin{lemma}\label{lemmadet}
Let $\mathfrak{a}$ be a nontrivial ideal of $\mathcal{O}_K$.
Let $\{\nu_1,\,\nu_2\}$ be a $\mathbb{Z}$-basis for $\mathfrak{a}$
such that $\nu_1/\nu_2\in\mathbb{H}$, and so
\begin{equation}\label{nAt}
\begin{bmatrix}\nu_1\\\nu_2\end{bmatrix}
=A\begin{bmatrix}\tau_K\\1\end{bmatrix}\quad
\textrm{for some}~A\in M_2(\mathbb{Z})~\textrm{such that}~\det(A)>0.
\end{equation}
Then we have $\det(A)=\mathrm{N}_{K/\mathbb{Q}}(\mathfrak{a})$.
\end{lemma}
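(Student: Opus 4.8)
The plan is to identify $\det(A)$ with the lattice index $[\mathcal{O}_K:\mathfrak{a}]$ and then invoke the standard fact that this index equals the absolute norm of the integral ideal $\mathfrak{a}$. First I would recall that $\mathcal{O}_K=[\tau_K,\,1]$, so that $\mathfrak{a}=[\nu_1,\,\nu_2]$ is a full-rank sublattice of $\mathcal{O}_K$. Writing $A=\begin{bmatrix}p&q\\r&s\end{bmatrix}$, the relation (\ref{nAt}) says exactly that $\nu_1=p\tau_K+q$ and $\nu_2=r\tau_K+s$, so $A$ is the transition matrix from the $\mathbb{Z}$-basis $\{\tau_K,\,1\}$ of $\mathcal{O}_K$ to the $\mathbb{Z}$-basis $\{\nu_1,\,\nu_2\}$ of $\mathfrak{a}$. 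By the elementary theory of finitely generated abelian groups (via the Smith normal form), the index of such a sublattice is
\begin{equation*}
[\mathcal{O}_K:\mathfrak{a}]=|\det(A)|.
\end{equation*}

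Next I would pin down the sign. Using the notation $j(A,\,\tau_K)=r\tau_K+s$, the usual formula for the imaginary part under a fractional linear transformation gives
\begin{equation*}
\mathrm{Im}\left(\frac{\nu_1}{\nu_2}\right)
=\mathrm{Im}\left(\frac{p\tau_K+q}{r\tau_K+s}\right)
=\frac{\det(A)\,\mathrm{Im}(\tau_K)}{|j(A,\,\tau_K)|^2}.
\end{equation*}
Since $\tau_K\in\mathbb{H}$ and $\nu_1/\nu_2\in\mathbb{H}$ by hypothesis, both imaginary parts are positive, which forces $\det(A)>0$; this is consistent with the positivity asserted in (\ref{nAt}) and lets me drop the absolute value, so that $[\mathcal{O}_K:\mathfrak{a}]=\det(A)$.

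Finally I would combine this with the standard fact that, for a nontrivial integral ideal $\mathfrak{a}$ of $\mathcal{O}_K$, the absolute norm satisfies $\mathrm{N}_{K/\mathbb{Q}}(\mathfrak{a})=[\mathcal{O}_K:\mathfrak{a}]$. Chaining the two equalities yields $\det(A)=\mathrm{N}_{K/\mathbb{Q}}(\mathfrak{a})$, as claimed. There is essentially no hard step: the index--determinant identity and the norm--index identity are both routine, and the only point requiring any care is the orientation argument fixing $\det(A)>0$, which the imaginary-part formula settles at once.
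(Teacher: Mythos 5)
Your proof is correct, but it takes a genuinely different route from the paper. You identify $\det(A)$ with the lattice index $[\mathcal{O}_K:\mathfrak{a}]$ via the Smith normal form, fix the sign with the imaginary-part formula for fractional linear transformations, and then invoke the standard identity $\mathrm{N}_{K/\mathbb{Q}}(\mathfrak{a})=[\mathcal{O}_K:\mathfrak{a}]$ for integral ideals. The paper instead works with discriminants: it adjoins the conjugate basis to form the $2\times2$ matrix $\bigl[\begin{smallmatrix}\nu_1&\overline{\nu}_1\\ \nu_2&\overline{\nu}_2\end{smallmatrix}\bigr]=A\bigl[\begin{smallmatrix}\tau_K&\overline{\tau}_K\\ 1&1\end{smallmatrix}\bigr]$, takes determinants and squares to get $\mathrm{disc}_{K/\mathbb{Q}}(\mathfrak{a})=\det(A)^2 d_K$, and then quotes the relation $\mathrm{disc}_{K/\mathbb{Q}}(\mathfrak{a})=\mathrm{N}_{K/\mathbb{Q}}(\mathfrak{a})^2 d_K$ together with the hypothesis $\det(A)>0$ to extract the positive square root. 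Your argument is somewhat more elementary in that it needs only the norm-equals-index fact rather than the discriminant--norm identity, and your explicit orientation check is a nice touch (the paper simply takes $\det(A)>0$ as part of the setup following from $\nu_1/\nu_2\in\mathbb{H}$). The paper's route has the minor advantage of staying entirely within determinant manipulations and a single citation, avoiding any discussion of quotient groups. Both are complete and rigorous.
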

\begin{proof}
We derive from (\ref{nAt})
\begin{equation*}
\begin{bmatrix}\nu_1&\overline{\nu}_1\\
\nu_2&\overline{\nu}_2\end{bmatrix}=
A\begin{bmatrix}\tau_K&\overline{\tau}_K\\
1&1\end{bmatrix}.
\end{equation*}
By taking determinant and squaring, we get
\begin{equation*}
\mathrm{disc}_{K/\mathbb{Q}}(\mathfrak{a})=\det(A)^2d_K.
\end{equation*}
Moreover, since $\mathrm{disc}_{K/\mathbb{Q}}(\mathfrak{a})=
\mathrm{N}_{K/\mathbb{Q}}(\mathfrak{a})^2d_K$ (\cite[Proposition 13 in Chapter III]{Lang94}) and $\det(A)>0$, we conclude
$\det(A)=\mathrm{N}_{K/\mathbb{Q}}(\mathfrak{a})$.
\end{proof}

\begin{lemma}\label{basislemma}
If $Q=ax^2+bxy+cy^2\in\mathcal{Q}_N(d_K)$ and
$\mathfrak{m}=[-a\overline{\omega}_Q,\,a]$, then we have $\mathfrak{n}\mathfrak{m}=[\xi_1',\,\xi_2']$ where
\begin{equation*}
\begin{bmatrix}\xi_1'\\\xi_2'\end{bmatrix}=
\begin{bmatrix}a_1&d_Q\\0&Na\end{bmatrix}\begin{bmatrix}-a\overline{\omega}_Q\\1\end{bmatrix}.
\end{equation*}
\end{lemma}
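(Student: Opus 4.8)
The plan is to pass to $\mathbb{Z}$-bases adapted to $\tau_K$ and then pin down $\mathfrak{n}\mathfrak{m}$ by a module computation followed by a norm comparison. Since $\omega_Q=(-b+\sqrt{d_K})/2a$ and $\sqrt{d_K}=2\tau_K+b_K$, I would first rewrite $-a\overline{\omega}_Q=\tau_K+\beta$ with $\beta=(b+b_K)/2\in\mathbb{Z}$, so that $\mathfrak{m}=[\tau_K+\beta,\,a]$ and the asserted generators read $\xi_1'=a_1(\tau_K+\beta)+d_Q$ and $\xi_2'=Na$. The crucial preliminary observation is that $aa_1\mid d_Q$: reducing $d_Q\equiv-a_1\beta+a_2\Mod{N}$ modulo $a_1$ and using $a_1\mid N$ and $a_1\mid a_2$ gives $a_1\mid d_Q$, while $a\mid d_Q$ holds by definition and $\gcd(a,a_1)=1$ follows from $a_1\mid N$ together with $\gcd(N,a)=1$. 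Writing $N=a_1N'$, $a_2=a_1a_2'$ and $d_Q=a_1d_Q'$, I would factor $\mathfrak{n}=a_1L$ with $L=[\tau_K+a_2',\,N']=\mathfrak{n}/a_1$; being a fractional ideal contained in $\mathcal{O}_K$, the lattice $L$ is again an integral ideal, with canonical basis as in Remark \ref{canonicalconverse}. Then $\xi_1'=a_1(\tau_K+\beta+d_Q')$ and $\xi_2'=a_1N'a$, so the lemma is equivalent to the cleaner statement $L\mathfrak{m}=M$, where $M:=[\tau_K+(\beta+d_Q'),\,N'a]$.

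Since a product of ideals is the $\mathbb{Z}$-span of the pairwise products of $\mathbb{Z}$-basis elements, I would prove $L\mathfrak{m}\subseteq M$ by checking the four products of $\{\tau_K+a_2',\,N'\}$ and $\{\tau_K+\beta,\,a\}$. Membership of $p\tau_K+q$ in $M$ means $q\equiv p(\beta+d_Q')\Mod{N'a}$. The product $N'a$ is $\xi_2'$ itself; the product $a(\tau_K+a_2')$ lies in $M$ precisely because $\beta+d_Q'\equiv a_2'\Mod{N'}$, which is the defining congruence of $d_Q$ divided by $a_1$; and the product $N'(\tau_K+\beta)$ lies in $M$ precisely because $a\mid d_Q'$.

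The main obstacle is the leading product $(\tau_K+a_2')(\tau_K+\beta)$. Here I would substitute the minimal relation $\tau_K^2=-b_K\tau_K-c_K$ for $\tau_K=\omega_{Q_0}$ to write it as $p\tau_K+q$ with $p=\beta+a_2'-b_K$ and $q=a_2'\beta-c_K$, and then verify the membership congruence $q\equiv p(\beta+d_Q')\Mod{N'a}$ by splitting it through the Chinese remainder theorem. Modulo $N'$ it collapses, via $\beta+d_Q'\equiv a_2'\Mod{N'}$, to the requirement $N'\mid (a_2')^2-b_Ka_2'+c_K=\mathrm{N}_{K/\mathbb{Q}}(\tau_K+a_2')$, which holds since $L$ is an integral ideal; modulo $a$ it collapses, via $a\mid d_Q'$, to the requirement $a\mid \beta^2-b_K\beta+c_K=\mathrm{N}_{K/\mathbb{Q}}(\tau_K+\beta)=ac$, which is immediate. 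This yields $L\mathfrak{m}\subseteq M$. To conclude I would compare norms: by Lemma \ref{lemmadet}, $\mathrm{N}_{K/\mathbb{Q}}(L\mathfrak{m})=\mathrm{N}_{K/\mathbb{Q}}(L)\,\mathrm{N}_{K/\mathbb{Q}}(\mathfrak{m})=N'\cdot a$, whereas the index $[\mathcal{O}_K:M]$ equals the determinant $N'a$ of the transition matrix from $\{\tau_K,\,1\}$ to the basis of $M$. As $L\mathfrak{m}\subseteq M$ are integral ideals of the same index they must coincide, and multiplying back by $a_1$ gives $\mathfrak{n}\mathfrak{m}=[\xi_1',\,\xi_2']$, as desired.
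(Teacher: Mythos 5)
Your proof is correct, but it takes a genuinely different route from the paper's. The paper exploits the coprimality of $\mathfrak{n}$ and $\mathfrak{m}$ to write $\mathfrak{n}\mathfrak{m}=\mathfrak{n}\cap\mathfrak{m}$, observes that $Na=\mathrm{lcm}(N,a)$ is then the smallest positive integer in the product, and invokes the existence and uniqueness of the canonical basis $[b_1\tau_K+b_2,\,Na]$ from Proposition \ref{Buellcanonical}: the norm computation of Lemma \ref{lemmadet} forces $b_1=a_1$, and $b_2$ is pinned down by exhibiting $a_1(-a\overline{\omega}_Q)+d_Q$ as an element of $\mathfrak{n}\cap\mathfrak{m}$ and noting that its difference from $\xi_1'$ is a rational integer of absolute value less than $Na$, hence zero. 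You instead factor out $a_1$, compute the product lattice $L\mathfrak{m}$ head-on from the four generator products (with the minimal polynomial of $\tau_K$ and a CRT split handling the one nontrivial product), and finish by comparing indices; I checked the divisibility bookkeeping ($aa_1\mid d_Q$, $\beta+d_Q'\equiv a_2'\Mod{N'}$, $a\mid\mathrm{N}_{K/\mathbb{Q}}(\tau_K+\beta)=ac$, and $N'\mid\mathrm{N}_{K/\mathbb{Q}}(\tau_K+a_2')$ because $L=a_1^{-1}\mathfrak{n}\subseteq\mathcal{O}_K$ is integral) and it all goes through. The paper's argument is shorter given the canonical-basis machinery and explains why $d_Q$ carries the normalization $0\leq d_Q+a_1(b+b_K)/2<Na$ (it is exactly the canonical-basis range condition), whereas your argument never needs that normalization, only the congruences mod $N$ and mod $a$; it is more self-contained, avoids the identification of the product with the intersection, and makes explicit precisely where each defining congruence of $d_Q$ enters. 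One cosmetic point: in the final step you call $M$ an integral ideal before having shown it is one; what you actually use is only that $M$ is a lattice containing $L\mathfrak{m}$ with the same index in $\mathcal{O}_K$, which suffices.
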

\begin{proof}
The smallest positive integer in $\mathfrak{m}=[-a\overline{\omega}_Q,\,a]$ is $a$. Note by
the fact $\gcd(N,\,a)=1$ and Lemma
\ref{prime} that the ideals $\mathfrak{n}$ and $\mathfrak{m}$ are
relatively prime. Hence the smallest positive integer in $\mathfrak{n}\mathfrak{m}=\mathfrak{n}\cap
\mathfrak{m}$ is
$\mathrm{lcm}(N,\,a)=Na$.
Now, if $\{\xi_1',\,\xi_2'\}$ is the canonical basis for $\mathfrak{n}\mathfrak{m}$, then
\begin{equation}\label{xBt}
\begin{bmatrix}
\xi_1'\\\xi_2'
\end{bmatrix}=B
\begin{bmatrix}\tau_K\\1\end{bmatrix}
\quad\textrm{with}~B=\begin{bmatrix}
b_1&b_2\\0&Na
\end{bmatrix}
\end{equation}
for some unique pair of integers $(b_1,\,b_2)$ satisfying
\begin{equation*}
0<b_1\leq Na,\quad 0\leq b_2<Na,\quad b_1\,|\,Na,\quad b_1\,|\,b_2.
\end{equation*}
Since
\begin{eqnarray*}
\det(B)&=&\mathrm{N}_{K/\mathbb{Q}}(\mathfrak{n}\mathfrak{m})
\quad\textrm{by (\ref{xBt}) and Lemma \ref{lemmadet}}\\
&=&\mathrm{N}_{K/\mathbb{Q}}(\mathfrak{n})
\mathrm{N}_{K/\mathbb{Q}}(\mathfrak{m})\\
&=&(a_1N)a\quad\textrm{by (\ref{xat}), Lemmas \ref{prime} and \ref{lemmadet}},
\end{eqnarray*}
we have $b_1=a_1$. On the other hand, we attain
\begin{equation}\label{widetildea2}
\mathfrak{n}=[a_1\tau_K+a_2,\,N]
=\left[a_1(-a\overline{\omega}_Q)-a_1\left(\frac{b+b_K}{2}\right)+a_2,\,N\right].
\end{equation}
Observe that
\begin{eqnarray*}
\mathfrak{n}\mathfrak{m}=\mathfrak{n}\cap\mathfrak{m}&\ni&
\xi_1'-(a_1(-a\overline{\omega}_Q)+d_Q)\quad
\textrm{because}~a_1(-a\overline{\omega}_Q)+d_Q\in\mathfrak{n}
\cap\mathfrak{m}~\textrm{by (\ref{dQ}) and (\ref{widetildea2})}\\
&=&a_1\tau_K+b_2-(a_1(-a\overline{\omega}_Q)+d_Q)
\quad\textrm{by (\ref{xBt}) and the fact $b_1=a_1$}\\
&=&a_1\left(-a\overline{\omega}_Q-\frac{b_K+b}{2}\right)+b_2
-(a_1(-a\overline{\omega}_Q)+d_Q)\\
&=&b_2-d_Q-a_1\left(\frac{b_K+b}{2}\right).
\end{eqnarray*}
Here, since
$0\leq b_2,\,d_Q+a_1(b+b_K)/2<Na$ and $Na$ is the smallest positive integer in $\mathfrak{n}\mathfrak{m}$, we must have $b_2=d_Q+a_1(b+b_K)/2$. Thus we obtain by (\ref{xBt}) that
\begin{equation*}
\begin{bmatrix}\xi_1'\\\xi_2'\end{bmatrix}
=\begin{bmatrix}a_1&d_Q+a_1(b_K+b)/2\\0&Na\end{bmatrix}
\begin{bmatrix}
\tau_K\\1
\end{bmatrix}=
\begin{bmatrix}a_1&d_Q\\0&Na\end{bmatrix}
\begin{bmatrix}
\tau_K+(b_K+b)/2\\1
\end{bmatrix}=
\begin{bmatrix}a_1&d_Q\\0&Na\end{bmatrix}
\begin{bmatrix}
-a\overline{\omega}_Q\\1
\end{bmatrix}.
\end{equation*}
\end{proof}

\begin{theorem}\label{explicitGalois}
We have an isomorphism
\begin{eqnarray*}
\mathcal{Q}_N(d_K)/\sim_\mathfrak{n}&\rightarrow&\mathrm{Gal}(K_\mathfrak{n}/K)\\
\mathrm{[}Q\mathrm{]}=
[ax^2+bxy+cy^2]&\mapsto&
\left(h(-1/\xi)
\mapsto
h^{\widetilde{a}S}
\left(\begin{bmatrix}a_1&d_Q/a\\0&N\end{bmatrix}(-\overline{\omega}_Q)\right)~|~
h(\tau)\in\mathcal{F}^1_N(\mathbb{Q})~
\textrm{is finite at}~-1/\xi\right).
\end{eqnarray*}
\end{theorem}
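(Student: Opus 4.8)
The plan is to realize the asserted map as the composition
$$\mathcal{Q}_N(d_K)/\!\sim_\mathfrak{n}\ \xrightarrow{\ \phi_\mathfrak{n}\ }\ \mathcal{C}(\mathfrak{n})\ \xrightarrow{\ \sigma_\mathfrak{n}\ }\ \mathrm{Gal}(K_\mathfrak{n}/K),$$
where $\phi_\mathfrak{n}$ is the isomorphism of Theorem \ref{canonicalisomorphism} and $\sigma_\mathfrak{n}$ is the Artin isomorphism of class field theory. This composite is automatically a group isomorphism, so the whole content of the theorem is to verify that, for $C=\phi_\mathfrak{n}([Q])=[[\omega_Q,\,1]]$, the automorphism $\sigma_\mathfrak{n}(C)$ acts on the generators $h(-1/\xi)$ of $K_\mathfrak{n}$ supplied by Corollary \ref{specialization} exactly by the displayed rule. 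Once the action on generators is confirmed, well-definedness, injectivity and surjectivity are all inherited from $\phi_\mathfrak{n}$ and $\sigma_\mathfrak{n}$.

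I would first reduce the problem to evaluating a single Fricke invariant. For $h(\tau)\in\mathcal{F}^1_N(\mathbb{Q})$ finite at $-1/\xi$, let $\{h_\mathbf{v}(\tau)\}_\mathbf{v}$ be the Fricke family corresponding to $h$ under the isomorphism (\ref{familyisomorphism}), so that $h=h_{\left[\begin{smallmatrix}1/N&0\end{smallmatrix}\right]}$. By Remark \ref{h(1)} (ii) we have $h(-1/\xi)=h(C_0)$, the Fricke invariant of the identity class, whence Proposition \ref{transformation} gives $h(-1/\xi)^{\sigma_\mathfrak{n}(C)}=h(C_0)^{\sigma_\mathfrak{n}(C)}=h(C_0C)=h(C)$. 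Thus it suffices to compute $h(C)$ explicitly.

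To compute $h(C)$ I would take a convenient integral ideal in the class $C$, namely $\mathfrak{c}=\overline{\mathfrak{m}}=[a\omega_Q,\,a]$, where $\mathfrak{m}=[-a\overline{\omega}_Q,\,a]=[\omega_Q,\,1]^{-1}$ lies in $C^{-1}$ and is prime to $\mathfrak{n}$. Then $\mathfrak{n}\mathfrak{c}^{-1}=\tfrac1a\mathfrak{n}\mathfrak{m}$, and Lemma \ref{basislemma} describes $\mathfrak{n}\mathfrak{m}=[\xi_1',\,\xi_2']$ via the matrix $\left[\begin{smallmatrix}a_1&d_Q\\0&Na\end{smallmatrix}\right]$ applied to $\left[\begin{smallmatrix}-a\overline{\omega}_Q\\1\end{smallmatrix}\right]$. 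Dividing by $a$ (legitimate since $a\mid d_Q$ by (\ref{dQ})) produces the $\mathbb{Z}$-basis $\omega_1=\xi_1'/a,\ \omega_2=N$ of $\mathfrak{n}\mathfrak{c}^{-1}$, whose quotient is exactly $\omega:=\omega_1/\omega_2=\left[\begin{smallmatrix}a_1&d_Q/a\\0&N\end{smallmatrix}\right](-\overline{\omega}_Q)$. As $N=0\cdot\omega_1+1\cdot\omega_2$, the defining recipe for the Fricke invariant yields $h(C)=h_{\left[\begin{smallmatrix}0&1/N\end{smallmatrix}\right]}(\omega)$. Finally, using the Fricke family relations (F1), (F2) and Proposition \ref{functionGalois}, the member $h_{\left[\begin{smallmatrix}0&1/N\end{smallmatrix}\right]}$ is obtained from $h=h_{\left[\begin{smallmatrix}1/N&0\end{smallmatrix}\right]}$ by the $\mathrm{GL}_2(\mathbb{Z}/N\mathbb{Z})$–transformation $\widetilde{a}S$, since $\left[\begin{smallmatrix}1/N&0\end{smallmatrix}\right]\widetilde{a}S\equiv\pm\left[\begin{smallmatrix}0&1/N\end{smallmatrix}\right]\ (\mathrm{mod}\ M_{1,2}(\mathbb{Z}))$, the determinantal $\widetilde{a}$–part recording the norm $a$ of the representing ideal $\mathfrak{c}$. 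Hence $h(C)=h^{\widetilde{a}S}(\omega)$, which is the asserted image of $h(-1/\xi)$.

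The step I expect to be the main obstacle is the explicit evaluation of $h(C)$, and within it the determinant bookkeeping that turns the transition matrix into $\widetilde{a}S$ rather than $S$: one must pass from the canonical basis of $\mathfrak{n}\mathfrak{m}$ furnished by Lemma \ref{basislemma} to the basis of $\mathfrak{n}\mathfrak{c}^{-1}$ demanded by the Fricke-invariant recipe, track the factor $a$ (whence $\widetilde{a}\equiv a^{-1}\ (\mathrm{mod}\ N)$), and confirm that the resulting $\mathrm{GL}_2(\mathbb{Z}/N\mathbb{Z})$–element genuinely represents $\sigma_\mathfrak{n}(C)$; note that on $\mathcal{F}^1_N(\mathbb{Q})$ the diagonal part acts trivially, so that $h^{\widetilde{a}S}=h^{S}$ as values, but retaining $\widetilde{a}$ is what makes the matrix the honest one. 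A secondary routine point is that this prescription respects the finiteness hypothesis and is independent of the representative $Q$, both of which are immediate once the action has been identified with $\sigma_\mathfrak{n}(\phi_\mathfrak{n}([Q]))$.
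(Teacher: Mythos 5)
Your overall strategy is the same as the paper's: factor the map as $\sigma_\mathfrak{n}\circ\phi_\mathfrak{n}$, reduce via Remark \ref{h(1)} (ii) and Proposition \ref{transformation} to the evaluation of the single Fricke invariant $h(C)$, and compute it from Lemma \ref{basislemma}. The point $\omega=\left[\begin{smallmatrix}a_1&d_Q/a\\0&N\end{smallmatrix}\right](-\overline{\omega}_Q)$ you obtain is also the correct one. But there is a genuine gap in the choice of the representative ideal, and it is exactly the place where the exponent $\widetilde{a}$ has to come from. The ideal $\mathfrak{c}=\overline{\mathfrak{m}}=a[\omega_Q,\,1]$ is integral, but it lies in the ray class $C\cdot[a\mathcal{O}_K]$, not in $C$: for $a\mathcal{O}_K$ to be in $P_{K,1}(\mathfrak{n})$ one needs $\zeta a\equiv^*1\Mod{\mathfrak{n}}$ for some unit $\zeta$, i.e.\ essentially $a\equiv\pm1\Mod{N}$, which fails in general. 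Since the recipe defining $h(C)$ requires $\mathfrak{c}$ to be in the class $C$ itself, your computation with $N=0\cdot\omega_1+1\cdot\omega_2$ evaluates $h(C[a\mathcal{O}_K])$, namely $h_{\left[\begin{smallmatrix}0&1/N\end{smallmatrix}\right]}(\omega)=h^{S}(\omega)$, rather than $h(C)$.

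The patch you then apply does not close this gap. One has $\left[\begin{smallmatrix}1/N&0\end{smallmatrix}\right]\widetilde{a}S=\left[\begin{smallmatrix}0&-\widetilde{a}/N\end{smallmatrix}\right]$, which is congruent to $\pm\left[\begin{smallmatrix}0&1/N\end{smallmatrix}\right]$ modulo $M_{1,2}(\mathbb{Z})$ only when $\widetilde{a}\equiv\pm1\Mod{N}$; there is no ``determinantal part'' of the Galois action that records the norm of $\mathfrak{c}$ for you. Likewise, the claim that $h^{\widetilde{a}S}=h^{S}$ as values for $h\in\mathcal{F}^1_N(\mathbb{Q})$ is false: the field $\mathcal{F}^1_N(\mathbb{Q})$ is fixed by the matrices $\left[\begin{smallmatrix}1&0\\ \ast&1\end{smallmatrix}\right]$ and $\left[\begin{smallmatrix}1&0\\0&d\end{smallmatrix}\right]$, not by the scalars $\widetilde{a}I_2$, and if the claim were true the factor $\widetilde{a}$ in the statement of the theorem would be vacuous. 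The paper's resolution is to take $\mathfrak{c}=a^{\varphi(N)}[\omega_Q,\,1]$, which is integral and genuinely lies in $C$ because $a^{\varphi(N)}\equiv1\Mod{N}$; then $\mathfrak{n}\mathfrak{c}^{-1}=a^{-\varphi(N)}[\,a_1(-a\overline{\omega}_Q)+d_Q,\,Na\,]$ and $N=0\cdot\omega_1+a^{\varphi(N)-1}\omega_2$, so the second coordinate of the index vector is $a^{\varphi(N)-1}/N\equiv\widetilde{a}/N$, and $h(C)=h_{\left[\begin{smallmatrix}0&\widetilde{a}/N\end{smallmatrix}\right]}(\omega)=h^{\widetilde{a}S}(\omega)$ by (F1) and Proposition \ref{functionGalois} (i). The $\widetilde{a}$ thus arises from writing $N$ in the basis of $\mathfrak{n}\mathfrak{c}^{-1}$ for a correctly chosen $\mathfrak{c}$, not from any transformation of the index vector afterwards.
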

\begin{proof}
Let $Q=ax^2+bxy+cy^2\in\mathcal{Q}_N(d_K)$ and
$C=[[\omega_Q,\,1]]$ in $\mathcal{C}(\mathfrak{n})$. Since
$a^{\varphi(N)}\equiv1\Mod{N}$, where $\varphi$ is the Euler totient function,
one can take
$\mathfrak{c}=a^{\varphi(N)}[\omega_Q,\,1]$ as an integral ideal in $C$.
We get by Lemma \ref{prime} that
\begin{equation*}
\mathfrak{c}\overline{\mathfrak{c}}=\mathrm{N}_{K/\mathbb{Q}}(\mathfrak{c})
\mathcal{O}_K=a^{2\varphi(N)}\mathrm{N}_{K/\mathbb{Q}}([\omega_Q,\,1])
\mathcal{O}_K=a^{2\varphi(N)-1}\mathcal{O}_K,
\end{equation*}
and so
\begin{equation*}
\mathfrak{c}^{-1}=\overline{\mathfrak{c}}\,a^{-2\varphi(N)+1}\mathcal{O}_K=
a^{-\varphi(N)+1}[-\overline{\omega}_Q,\,1].
\end{equation*}
Thus we establish by Lemma \ref{basislemma} that
\begin{equation}\label{nc-1}
\mathfrak{n}\mathfrak{c}^{-1}=
a^{-\varphi(N)}[a_1\tau_K+a_2,\,N][-a\overline{\omega}_Q,\,a]=
a^{-\varphi(N)}[a_1(-a\overline{\omega}_Q)+d_Q,\,Na]
\end{equation}
and
\begin{equation}\label{comb}
N=0\cdot\{a^{-\varphi(N)}(a_1(-a\overline{\omega}_Q)+d_Q)\}+
a^{\varphi(N)-1}(a^{-\varphi(N)}Na).
\end{equation}
Now, let $h(\tau)$ be an element of $\mathcal{F}^1_N(\mathbb{Q})$
which is finite at $-1/\xi$, and
let
$\{h_\mathbf{v}(\tau)\}_\mathbf{v}$
be the Fricke family of level $N$ induced from $h(\tau)$ via
the isomorphism mentioned in (\ref{familyisomorphism}).
Then we achieve that
\begin{eqnarray*}
h(-1/\xi)^{\sigma_\mathfrak{n}(C)}&=&
h(C_0)^{\sigma_\mathfrak{n}(C)}\quad\textrm{by Remark \ref{h(1)} (ii)}\\
&=&h(C_0C)\quad\textrm{by Proposition \ref{transformation}}\\
&=&h(C)\\
&=&h_{\left[\begin{smallmatrix}
0&a^{\varphi(N)-1}/N
\end{smallmatrix}\right]}\left(
\frac{a_1(-a\overline{\omega}_Q)+d_Q}{Na}\right)\quad
\textrm{by (\ref{nc-1}) and (\ref{comb})}\\
&=&h_{\left[\begin{smallmatrix}0&\widetilde{a}/N\end{smallmatrix}\right]}
\left(\begin{bmatrix}a_1a&d_Q\\0&Na\end{bmatrix}(-\overline{\omega}_Q)\right)\quad
\textrm{by (F1)}\\
&=&h_{\left[\begin{smallmatrix}1/N&0\end{smallmatrix}\right]
\widetilde{a}S}
\left(\begin{bmatrix}a_1&d_Q/a\\0&N\end{bmatrix}(-\overline{\omega}_Q)\right)
\quad\textrm{since}~\begin{bmatrix}a_1a&d_Q\\0&Na\end{bmatrix}~
\textrm{and}~
\begin{bmatrix}a_1&d_Q/a\\0&N\end{bmatrix}\\
&&\hspace{4cm}\textrm{yield the same fractional linear transformation on $\mathbb{H}$}\\
&=&h_{\left[\begin{smallmatrix}1/N&0\end{smallmatrix}\right]}^{\widetilde{a}S}
\left(\begin{bmatrix}a_1&d_Q/a\\0&N\end{bmatrix}(-\overline{\omega}_Q)\right)
\quad\textrm{by Proposition \ref{functionGalois} (i)}\\
&=&h^{\widetilde{a}S}
\left(\begin{bmatrix}a_1&d_Q/a\\0&N\end{bmatrix}(-\overline{\omega}_Q)\right)
\quad\textrm{due to the isomorphism in (\ref{familyisomorphism})}.
\end{eqnarray*}
This, together with Corollary \ref{specialization}, completes the proof.
\end{proof}

\begin{remark}
In particular, if $\mathfrak{n}=N\mathcal{O}_K$, then we derive that
\begin{equation*}
h^{\widetilde{a}S}
\left(\begin{bmatrix}a_1&d_Q/a\\0&N\end{bmatrix}(-\overline{\omega}_Q)\right)
=h^{\widetilde{a}S}
\left(\begin{bmatrix}N&0\\0&N\end{bmatrix}(-\overline{\omega}_Q)\right)
=h^{\widetilde{a}S}
(-\overline{\omega}_Q).
\end{equation*}
\end{remark}

\section {Examples of extended form class groups}

In this last section, we shall explain how to find
elements of $\mathcal{Q}_N(d_K)/\sim_\mathfrak{n}$ and
give a couple of concrete examples.
\par
For each $Q=ax^2+bxy+cy^2\in\mathcal{Q}_N(d_K)$,
let
\begin{equation*}
V_Q=\left\{\begin{bmatrix}u&v\end{bmatrix}\in M_{1,\,2}(\mathbb{Z})~|~
(u\omega_Q+v)\mathcal{O}_K\in P_K(N\mathcal{O}_K)\right\}.
\end{equation*}
Then, it is straightforward to show
\begin{equation*}
V_Q=\left\{\begin{bmatrix}u&v\end{bmatrix}\in M_{1,\,2}(\mathbb{Z})~|~
\gcd(N,\,Q(v,\,-u))=1\right\}
\end{equation*}
(\cite[Lemma 4.1]{E-K-S}). Furthermore, let
\begin{eqnarray*}
U_K&=&\{(m,\,n)\in\mathbb{Z}^2~|~m\tau_K+n\in\mathcal{O}_K^*\}\\
&=&\left\{\begin{array}{ll}
\{\pm(0,\,1)\} & \textrm{if}~K\neq\mathbb{Q}(\sqrt{-1}),\,\mathbb{Q}(\sqrt{-3}),\\
\{\pm(0,\,1),\,\pm(1,\,0)\} & \textrm{if}~K=\mathbb{Q}(\sqrt{-1}),\\
\{\pm(0,\,1),\,\pm(1,\,0),\,\pm(1,\,1)\} & \textrm{if}~K=\mathbb{Q}(\sqrt{-3})
\end{array}\right.
\end{eqnarray*}
(\cite[Exercise 5.9]{Cox}).

\begin{lemma}\label{VQ}
Let $Q=ax^2+bxy+cy^2\in\mathcal{Q}_N(d_K)$ and
$\begin{bmatrix}r&s\end{bmatrix},\,
\begin{bmatrix}u&v\end{bmatrix}\in V_Q$. Then,
$[(r\omega_Q+s)\mathcal{O}_K]=
[(u\omega_Q+v)\mathcal{O}_K]$ in $\mathcal{C}(\mathfrak{n})$ if and only if
\begin{equation}\label{simQ}
\begin{array}{l}
\phantom{\equiv}\begin{bmatrix}r&s\end{bmatrix}
\begin{bmatrix}1&(b_K-b)/2\\0&a\end{bmatrix}
\begin{bmatrix}N/a_1&-a_2/a_1\\
0&1\end{bmatrix}\vspace{0.1cm}\\
\equiv
\begin{bmatrix}u&v\end{bmatrix}
\begin{bmatrix}
1&(b_K-b)/2\\0&a
\end{bmatrix}
\begin{bmatrix}
-mb_K+n&-mc_K\\m&n
\end{bmatrix}
\begin{bmatrix}N/a_1&-a_2/a_1\\
0&1\end{bmatrix}
\Mod{NM_{1,\,2}(\mathbb{Z})}\\
\phantom{\equiv}\textrm{for some}~(m,\,n)\in U_K.
\end{array}
\end{equation}
\end{lemma}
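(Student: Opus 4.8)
The plan is to convert the equality of ray classes into a single additive congruence modulo $\mathfrak{n}$ between two elements of $\mathcal{O}_K$, and then to read that congruence off in coordinates relative to the basis $\{\tau_K,\,1\}$; each of the three matrices in (\ref{simQ}) will turn out to encode one step of this translation. First I would record what membership in $V_Q$ buys us. Since $N\in\mathfrak{n}$ we have $N\mathcal{O}_K\subseteq\mathfrak{n}$, so any ideal coprime to $N\mathcal{O}_K$ is coprime to $\mathfrak{n}$; hence for $\begin{bmatrix}r&s\end{bmatrix},\,\begin{bmatrix}u&v\end{bmatrix}\in V_Q$ the principal ideals $(r\omega_Q+s)\mathcal{O}_K$ and $(u\omega_Q+v)\mathcal{O}_K$ both lie in $I_K(\mathfrak{n})$. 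Their classes in $\mathcal{C}(\mathfrak{n})=I_K(\mathfrak{n})/P_{K,\,1}(\mathfrak{n})$ coincide exactly when $\frac{r\omega_Q+s}{u\omega_Q+v}\mathcal{O}_K\in P_{K,\,1}(\mathfrak{n})$, and since any two generators of a principal ideal differ by a unit, this is equivalent to the existence of $\zeta\in\mathcal{O}_K^*$ with $(r\omega_Q+s)\equiv^*\zeta(u\omega_Q+v)\Mod{\mathfrak{n}}$. I would parametrize $\zeta=m\tau_K+n$ with $(m,\,n)\in U_K$, using that $(m,\,n)\mapsto m\tau_K+n$ is a bijection $U_K\to\mathcal{O}_K^*$.

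Next I would clear denominators. Because $a\omega_Q=\tau_K+(b_K-b)/2\in\mathcal{O}_K$, both $\alpha:=a(r\omega_Q+s)$ and $\beta:=a(u\omega_Q+v)$ lie in $\mathcal{O}_K$, and multiplying the previous multiplicative congruence by $a$ — which is prime to $\mathfrak{n}$, as $\gcd(N,\,a)=1$ and $\mathfrak{n}\cap\mathbb{Z}=N\mathbb{Z}$ — yields $\alpha\equiv^*\zeta\beta\Mod{\mathfrak{n}}$. Now $\alpha$ and $\zeta\beta$ are honest elements of $\mathcal{O}_K$, each prime to $\mathfrak{n}$, and for such integers the multiplicative congruence $\equiv^*$ collapses to the ordinary one, i.e. $\alpha-\zeta\beta\in\mathfrak{n}$. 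This reduction (that $\equiv^*$ and $\equiv$ agree on integers prime to $\mathfrak{n}$) is where the coprimality hypotheses are genuinely used, and it is the one point in the argument that needs a careful justification rather than a computation.

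Finally I would pass to coordinates. Writing $x\tau_K+y$ as the row vector $\begin{bmatrix}x&y\end{bmatrix}$, one checks that $\begin{bmatrix}r&s\end{bmatrix}\begin{bmatrix}1&(b_K-b)/2\\0&a\end{bmatrix}$ and $\begin{bmatrix}u&v\end{bmatrix}\begin{bmatrix}1&(b_K-b)/2\\0&a\end{bmatrix}$ are the coordinates of $\alpha$ and $\beta$, and that right multiplication by $\begin{bmatrix}-mb_K+n&-mc_K\\m&n\end{bmatrix}$ realizes multiplication by $\zeta=m\tau_K+n$ (via $\tau_K^2=-b_K\tau_K-c_K$), so that $\zeta\beta$ has coordinates $\begin{bmatrix}u&v\end{bmatrix}\begin{bmatrix}1&(b_K-b)/2\\0&a\end{bmatrix}\begin{bmatrix}-mb_K+n&-mc_K\\m&n\end{bmatrix}$. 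It then remains to show that for $\delta=X\tau_K+Y$ one has $\delta\in\mathfrak{n}$ if and only if $\begin{bmatrix}X&Y\end{bmatrix}\begin{bmatrix}N/a_1&-a_2/a_1\\0&1\end{bmatrix}\equiv\begin{bmatrix}0&0\end{bmatrix}\Mod{NM_{1,\,2}(\mathbb{Z})}$. This is a direct verification from $\mathfrak{n}=\mathbb{Z}(a_1\tau_K+a_2)+\mathbb{Z}N$ together with $a_1\,|\,a_2$ and $a_1\,|\,N$ (which also make the matrix integral), and it amounts precisely to the two conditions $a_1\,|\,X$ and $Y\equiv(a_2/a_1)X\Mod{N}$.

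Applying this last equivalence with $\delta=\alpha-\zeta\beta$ converts the relation $\alpha-\zeta\beta\in\mathfrak{n}$ into the displayed congruence (\ref{simQ}), and letting $(m,\,n)$ range over $U_K$ delivers both implications at once. Thus the whole argument is a chain of faithful dictionaries — ray class equality, then congruence of integers prime to $\mathfrak{n}$, then coordinates with the three matrices $\begin{bmatrix}1&(b_K-b)/2\\0&a\end{bmatrix}$, $\begin{bmatrix}-mb_K+n&-mc_K\\m&n\end{bmatrix}$, $\begin{bmatrix}N/a_1&-a_2/a_1\\0&1\end{bmatrix}$ — and I expect the only real obstacle to be the middle reduction from $\equiv^*$ to $\equiv$; once that is secured, everything else is matrix bookkeeping.
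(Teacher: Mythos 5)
Your proposal is correct and follows essentially the same route as the paper: translate equality of ray classes into $\frac{r\omega_Q+s}{u\omega_Q+v}\equiv^*\zeta\Mod{\mathfrak{n}}$ for a unit $\zeta=m\tau_K+n$, clear denominators using $\gcd(N,a)=1$ and $a\omega_Q\in\mathcal{O}_K$, and unwind the resulting membership in $\mathfrak{n}=[a_1\tau_K+a_2,\,N]$ in coordinates with respect to $\{\tau_K,\,1\}$, which is exactly what the three matrices encode. The only cosmetic difference is that the paper carries out the final step by expressing everything in the basis $\{\xi_1,\xi_2\}$ rather than isolating the criterion $a_1\,|\,X$, $Y\equiv(a_2/a_1)X\Mod{N}$ as you do, but the computation is the same.
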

\begin{proof}
We deduce that
\begin{eqnarray*}
&&[(r\omega_Q+s)\mathcal{O}_K]=[(u\omega_Q+v)\mathcal{O}_K]\quad\textrm{in}~
\mathcal{C}(\mathfrak{n})\\
&\Longleftrightarrow&
\frac{r\omega_Q+s}{u\omega_Q+v}\mathcal{O}_K\in P_{K,\,1}(\mathfrak{n})\\
&\Longleftrightarrow&\frac{r\omega_Q+s}{u\omega_Q+v}\equiv^*\zeta\Mod{\mathfrak{n}}\quad
\textrm{for some}~\zeta\in\mathcal{O}_K^*\\
&\Longleftrightarrow&r(a\omega_Q)+as\equiv\zeta
\{u(a\omega_Q)+av\}\Mod{\mathfrak{n}}\quad\textrm{since}~
\gcd(N,\,a)=1~\textrm{and}~a\omega_Q\in\mathcal{O}_K\\
&\Longleftrightarrow&
r\left(\tau_K+\frac{b_K-b}{2}\right)+as-
(m\tau_K+n)\left\{
u\left(\tau_K+\frac{b_K-b}{2}\right)+av
\right\}\in\mathfrak{n}\quad\textrm{for some}~(m,\,n)\in U_K\\
&\Longleftrightarrow&
\frac{1}{a_1}\left(r+mu\frac{b_K+b}{2}-mav-nu
\right)\xi_1+\frac{1}{N}
\left\{\left(-\frac{a_2}{a_1}\right)
\left(r+mu\frac{b_K+b}{2}-mav-nu
\right)\right.\\
&&\left.
+r\frac{b_K-b}{2}+as+muc_K-nu\frac{b_K-b}{2}-nav
\right\}\xi_2\in\mathfrak{n}=[\xi_1,\,\xi_2]\\\\
&&\textrm{because}~\tau_K^2+b_K\tau_K+c_K=0~
\textrm{and}~\begin{bmatrix}\tau_K\\1\end{bmatrix}
=\begin{bmatrix}a_1&a_2\\0&N\end{bmatrix}^{-1}
\begin{bmatrix}\xi_1\\\xi_2\end{bmatrix}\\
&\Longleftrightarrow&
\left\{\begin{array}{ll}\displaystyle
r\equiv\left(-m\frac{b_K+b}{2}+n\right)u
+mav\Mod{a_1},\\
\displaystyle\left(-\frac{a_2}{a_1}+\frac{b_K-b}{2}\right)r+as\equiv
\left\{\frac{a_2}{a_1}\left(m\frac{b_K+b}{2}-n\right)-mc_K+n\frac{b_K-b}{2}\right\}u\\
\displaystyle\hspace{4.7cm}+\left(-ma\frac{a_2}{a_1}+na\right)v\Mod{N}.
\end{array}\right.
\end{eqnarray*}
This proves the lemma.
\end{proof}

Now, define
an equivalence relation $\sim_Q$ on the set $V_Q$ as follows:
Let $\begin{bmatrix}r&s\end{bmatrix},\,\begin{bmatrix}
u&v\end{bmatrix}\in V_Q$. Then,
$\begin{bmatrix}r&s\end{bmatrix}\sim_Q\begin{bmatrix}
u&v\end{bmatrix}$ if and only if they satisfy
the congruence relation stated in (\ref{simQ}).

\begin{proposition}\label{algorithm}
There is an algorithm to find all elements of the extended form class group $\mathcal{Q}_N(d_K)/\sim_\mathfrak{n}$.
\end{proposition}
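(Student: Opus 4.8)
The plan is to exploit the fibration of $\mathcal{C}(\mathfrak{n})$ over the form class group. By Theorem \ref{canonicalisomorphism} the set $\mathcal{Q}_N(d_K)/\sim_\mathfrak{n}$ is finite, being in bijection with $\mathcal{C}(\mathfrak{n})$. Moreover, since $Q\sim_\mathfrak{n}Q'$ forces $[[\omega_Q,\,1]]=[[\omega_{Q'},\,1]]$ already in the $\mathcal{O}_K$-ideal class group, the relation $\sim_\mathfrak{n}$ refines the proper equivalence, so there is a natural surjection $\mathcal{Q}_N(d_K)/\sim_\mathfrak{n}\twoheadrightarrow\mathcal{C}(d_K)$ onto the classical form class group. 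I would enumerate the base and then each fibre separately. For the base I would list the finitely many reduced forms of discriminant $d_K$, the standard effective description of $\mathcal{C}(d_K)$, and replace each by a properly equivalent form whose leading coefficient is prime to $N$ (always possible by primitivity together with the Chinese remainder theorem), obtaining representatives $Q_1,\dots,Q_h\in\mathcal{Q}_N(d_K)$, one per proper equivalence class. Every class of $\mathcal{Q}_N(d_K)/\sim_\mathfrak{n}$ then lies over exactly one $[Q_i]$, and the forms over $[Q_i]$ are precisely the $Q_i^{\,\alpha}$ with $\alpha\in\mathrm{SL}_2(\mathbb{Z})$ whose leading coefficient is prime to $N$.

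To enumerate the fibre over $[Q_i]$, I would use the set $V_{Q_i}$ together with the relation $\sim_{Q_i}$. Each $\begin{bmatrix}r&s\end{bmatrix}\in V_{Q_i}$ gives a principal ideal $(r\omega_{Q_i}+s)\mathcal{O}_K$ prime to $\mathfrak{n}$, hence a ray class $[[\omega_{Q_i},\,1]]\cdot[(r\omega_{Q_i}+s)\mathcal{O}_K]$ in the fibre, and by Lemma \ref{VQ} two such rows give the same ray class exactly when they are $\sim_{Q_i}$-equivalent. Since $\sim_{Q_i}$ is a congruence condition modulo $N$ with a finite disjunction over $U_K$, the quotient $V_{Q_i}/\sim_{Q_i}$ is a finite set that can be listed explicitly. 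Having fixed a representative row $\begin{bmatrix}r&s\end{bmatrix}$, I would realise the corresponding class as an honest form in one of two ways: either apply the composition law of $\S\ref{composition}$ to the class $[Q_i]$ and the class of $(r\omega_{Q_i}+s)\mathcal{O}_K$; or, more directly, take a coprime lift of $\begin{bmatrix}r&s\end{bmatrix}$, complete it to a matrix $\alpha\in\mathrm{SL}_2(\mathbb{Z})$ whose first column is chosen so that $Q_i^{\,\alpha}\in\mathcal{Q}_N(d_K)$, and output $Q_i^{\,\alpha}$, Lemma \ref{lemmapqrs} guaranteeing that its $\sim_\mathfrak{n}$-class depends only on the prescribed bottom row. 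Running over $i=1,\dots,h$ collects every class exactly once, since fibres over distinct base classes are disjoint.

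The main point to secure is that the map from $V_{Q_i}/\sim_{Q_i}$ to the fibre over $[Q_i]$ is a genuine bijection, not merely well defined. Injectivity is exactly Lemma \ref{VQ}. For surjectivity one must check that the principal ideals $(r\omega_{Q_i}+s)\mathcal{O}_K$ exhaust the kernel of $\mathcal{C}(\mathfrak{n})\to\mathcal{C}(d_K)$, that is, the group of principal fractional ideals prime to $\mathfrak{n}$ modulo $P_{K,\,1}(\mathfrak{n})$, which is isomorphic to $(\mathcal{O}_K/\mathfrak{n})^*/\mathrm{im}(\mathcal{O}_K^*)$. This is where the standing hypothesis $\gcd(N,\,a)=1$ does the work: the integral ideal $[a\omega_{Q_i},\,a]$ is prime to $\mathfrak{n}$, so by the Chinese remainder theorem the elements $r(a\omega_{Q_i})+sa$ already cover all residues of $\mathcal{O}_K/\mathfrak{n}$, and multiplying by the invertible residue $\widetilde{a}$ shows that the $r\omega_{Q_i}+s$ meet every class of $(\mathcal{O}_K/\mathfrak{n})^*$. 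I expect this surjectivity (and, relatedly, the verification that the coprime lift and the choice of first column can always be made so the output genuinely lies in $\mathcal{Q}_N(d_K)$) to be the only substantive obstacle; once it is in hand, the finiteness of the reduced forms and of each $V_{Q_i}/\sim_{Q_i}$ yields termination, and the disjointness of the fibres yields correctness of the enumeration.
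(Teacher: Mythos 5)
Your overall strategy coincides with the paper's: fibre the enumeration over the classical form class group via the reduced forms, adjusted so that each representative has leading coefficient prime to $N$, and enumerate each fibre by $V_{Q_i}/\sim_{Q_i}$, with Lemma \ref{VQ} giving injectivity and a Chinese-remainder argument (equivalently, $\mathcal{O}_K=\mathbb{Z}(a\omega_{Q_i})+\mathbb{Z}$) giving surjectivity onto the kernel $P_K(\mathfrak{n})/P_{K,\,1}(\mathfrak{n})$. The paper reaches the same surjectivity through the surjection $\mathcal{C}(N\mathcal{O}_K)\rightarrow\mathcal{C}(\mathfrak{n})$, so up to this point the two arguments are essentially identical.

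The gap is in the final realization step. If $\alpha=\begin{bmatrix}p&q\\r&s\end{bmatrix}$, then $\omega_{Q_i^{\,\alpha}}=\alpha^{-1}(\omega_{Q_i})$ and $[\alpha^{-1}(\omega_{Q_i}),\,1]=\frac{1}{-r\omega_{Q_i}+p}\,[\omega_{Q_i},\,1]$, so the ray class attached to $Q_i^{\,\alpha}$ is $[[\omega_{Q_i},\,1]]\cdot[(-r\omega_{Q_i}+p)\mathcal{O}_K]^{-1}$: it is governed by the entry $p$, which your prescription leaves essentially free (you only require the first column to make the leading coefficient prime to $N$). Hence the output class is not determined by the prescribed bottom row, two completions of the same row will in general land in different $\sim_\mathfrak{n}$-classes, and Lemma \ref{lemmapqrs} does not assert otherwise --- it characterizes when $Q'^{\,\alpha}\sim_\mathfrak{n}Q'$, not that $[Q'^{\,\alpha}]$ depends only on the bottom row of $\alpha$. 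The correct recipe, which is what the paper uses, is to choose $\gamma\equiv\begin{bmatrix}\ast&\ast\\u&v\end{bmatrix}\Mod{NM_2(\mathbb{Z})}$ and output $Q_i^{\,\gamma^{-1}}$: then $\omega_{Q_i^{\,\gamma^{-1}}}=\gamma(\omega_{Q_i})$ and $[\gamma(\omega_{Q_i}),\,1]=j(\gamma,\,\omega_{Q_i})^{-1}[\omega_{Q_i},\,1]$ with $j(\gamma,\,\omega_{Q_i})=u'\omega_{Q_i}+v'$ and $(u',\,v')\equiv(u,\,v)\Mod{N}$, so well-definedness and bijectivity onto the fibre follow from Lemma \ref{VQ}. (Your alternative via the composition law of Section \ref{composition} is circular: to compose you would first need a form whose class realizes the ray class of $(r\omega_{Q_i}+s)\mathcal{O}_K$, which is exactly the realization problem at hand.) With $\alpha$ replaced by $\gamma^{-1}$ your argument becomes the paper's proof.
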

\begin{proof}
Let $Q_1,\,Q_2,\,\ldots,\,Q_h$ be all
of the reduced forms in $\mathcal{Q}(d_K)$.
One can take $\alpha_i\in\mathrm{SL}_2(\mathbb{Z})$ so that
$Q_i'=Q_i^{\alpha_i}$ belongs to $\mathcal{Q}_N(d_K)$ ($i=1,\,2,\,
\ldots,\,h$)
(\cite[Lemmas 2.3 and 2.25]{Cox}). Observe by the isomorphism given in (\ref{CC}) that
\begin{equation}\label{Clw}
\mathcal{C}(\mathcal{O}_K)=\left\{[[\omega_{Q_1'},\,1]],\,
[[\omega_{Q_2'},\,1]],\,\ldots,\,
[[\omega_{Q_h'},\,1]]\right\}.
\end{equation}
On the other hand, since the canonical homomorphism $\mathcal{C}(N\mathcal{O}_K)
\rightarrow\mathcal{C}(\mathfrak{n})$ is surjective, we have
\begin{equation*}
P_K(\mathfrak{n})/P_{K,\,1}(\mathfrak{n})
\simeq
P_K(N\mathcal{O}_K)/
(P_K(N\mathcal{O}_K)\cap P_{K,\,1}(\mathfrak{n})),
\end{equation*}
from which it follows by Lemma \ref{VQ} that
\begin{equation}\label{PPw}
P_K(\mathfrak{n})/P_{K,\,1}(\mathfrak{n})
=
\left\{[(u\omega_{Q_i'}+v)\mathcal{O}_K]~|~
\left[\begin{bmatrix}u&v\end{bmatrix}\right]\in V_{Q_i'}/\sim_{Q_i'}\right\}
\quad\textrm{for each}~i=1,2,\,\ldots,\,h.
\end{equation}
Now that the canonical homomorphism
$\pi_\mathfrak{n}:\mathcal{C}(\mathfrak{n})\rightarrow
\mathcal{C}(\mathcal{O}_K)$
is a surjection with $\mathrm{Ker}(\pi_\mathfrak{n})=
P_K(\mathfrak{n})/P_{K,\,1}(\mathfrak{n})$, we see
by (\ref{Clw}) and (\ref{PPw}) that
\begin{eqnarray*}
\mathcal{C}(\mathfrak{n})&=&\left\{
\left[\frac{1}{u\omega_{Q_i'}+v}[\omega_{Q_i'},\,1]\right]~|~
i=1,\,2,\,\ldots,\,h~\textrm{and}~
\left[\begin{bmatrix}u&v\end{bmatrix}\right]\in V_{Q_i'}/\sim_{Q_i'}
\right\}\\
&=&\left\{
\left[\left[
\gamma_{i,\,\left[\left[\begin{smallmatrix}u&v\end{smallmatrix}\right]
\right]}(\omega_{Q_i'}),\,1
\right]\right]~|~
i=1,\,2,\,\ldots,\,h~\textrm{and}~
\left[\begin{bmatrix}u&v\end{bmatrix}\right]\in V_{Q_i'}/\sim_{Q_i'}
\right\}\\
&&\textrm{where}~\gamma_{i,\,\left[\left[\begin{smallmatrix}u&v\end{smallmatrix}\right]
\right]}~
\textrm{is an element of $\mathrm{SL}_2(\mathbb{Z})$ satisfying}~\gamma_{i,\,\left[\left[\begin{smallmatrix}u&v\end{smallmatrix}\right]
\right]}\equiv\begin{bmatrix}\mathrm{*}&\mathrm{*}\\
u&v\end{bmatrix}\Mod{NM_2(\mathbb{Z})}\\
&=&\left\{
\left[\left[
\omega_{Q_i'^{\,\gamma_{i,\,\left[\left[\begin{smallmatrix}u&v\end{smallmatrix}\right]
\right]}^{-1}}},\,1
\right]\right]~|~
i=1,\,2,\,\ldots,\,h~\textrm{and}~
\left[\begin{bmatrix}u&v\end{bmatrix}\right]\in V_{Q_i'}/\sim_{Q_i'}
\right\}.
\end{eqnarray*}
Therefore we attain
\begin{equation*}
\mathcal{Q}_N(d_K)/\sim_\mathfrak{n}=
\left\{
\left[
Q_i'^{\,\gamma_{i,\,\left[\left[\begin{smallmatrix}u&v\end{smallmatrix}\right]
\right]}^{-1}}
\right]~|~
i=1,\,2,\,\ldots,\,h~\textrm{and}~
\left[\begin{bmatrix}u&v\end{bmatrix}\right]\in V_{Q_i'}/\sim_{Q_i'}
\right\}.
\end{equation*}
\end{proof}

By using Remark \ref{canonicalconverse}, Lemmas \ref{lemmapqrs}, \ref{VQ} and Proposition \ref{algorithm}, we present the following examples.

\begin{example}
Let $K=\mathbb{Q}(\sqrt{-5})$ with $d_K=-20$, and let $\mathfrak{n}
=[2\tau_K+4,\,6]$ with $N=6$. Note that since $2$ is ramified in $K$
and $3$ splits in $K$, $\mathfrak{n}$ has prime ideal factorization
$\mathfrak{n}=\mathfrak{p}_2^2\mathfrak{p}_3$, where
$\mathfrak{p}_2=[\tau_K+1,\,2]$ is the prime ideal of $K$ lying above $2$
and $\mathfrak{p}_3=[\tau_K+2,\,3]$ is a prime ideal of $K$ lying above $3$.
We know that there are two reduced forms of discriminant $-20$
\begin{equation*}
Q_1=x^2+5y^2\quad\textrm{and}\quad
Q_2=2x^2+2xy+3y^2.
\end{equation*}
Take
\begin{equation*}
Q_1'=Q_1\quad\textrm{and}\quad Q_2'=Q_2^
{\left[\begin{smallmatrix}
1&-1\\1&0
\end{smallmatrix}\right]}=7x^2-6xy+2y^2.
\end{equation*}
One can then get that
\begin{equation*}
V_{Q_1'}/\sim_{Q_1'}=\left\{\left[\begin{bmatrix}0 & 1\end{bmatrix}\right],\,
\left[\begin{bmatrix}1&0\end{bmatrix}\right]\right\}
\quad\textrm{and}\quad
V_{Q_2'}/\sim_{Q_2'}=\left\{\left[\begin{bmatrix}0 & 1\end{bmatrix}\right],\,
\left[\begin{bmatrix}1&3\end{bmatrix}\right]\right\}
\end{equation*}
and
\begin{equation*}
\gamma_{1,\,\left[\left[\begin{smallmatrix}0 & 1\end{smallmatrix}\right]\right]}=
\begin{bmatrix}1&0\\0&1\end{bmatrix},\quad
\gamma_{1,\,\left[\left[\begin{smallmatrix}1 & 0\end{smallmatrix}\right]\right]}=
\begin{bmatrix}0&-1\\1&0\end{bmatrix},\quad
\gamma_{2,\,\left[\left[\begin{smallmatrix}0 & 1\end{smallmatrix}\right]\right]}=
\begin{bmatrix}1&0\\0&1\end{bmatrix},\quad
\gamma_{2,\,\left[\left[\begin{smallmatrix}1 & 3\end{smallmatrix}\right]\right]}=
\begin{bmatrix}1&2\\1&3\end{bmatrix}.
\end{equation*}
Thus we obtain
\begin{eqnarray*}
\mathcal{Q}_6(-20)/\sim_\mathfrak{n}&=&
\left\{
X_0=\left[Q_1'^{\,\gamma_{1,\,\left[\left[\begin{smallmatrix}0&1\end{smallmatrix}\right]
\right]}^{-1}}\right]=[x^2+5y^2],\,
X_2=\left[Q_1'^{\,\gamma_{1,\,\left[\left[\begin{smallmatrix}1&0\end{smallmatrix}\right]
\right]}^{-1}}\right]=[5x^2+y^2],\right.\\
&&~\left. X_1=\left[Q_2'^{\,\gamma_{2,\,\left[\left[\begin{smallmatrix}0&1\end{smallmatrix}\right]
\right]}^{-1}}\right]=[7x^2-6xy+2y^2],\,
X_3=\left[Q_2'^{\,\gamma_{2,\,\left[\left[\begin{smallmatrix}1&3\end{smallmatrix}\right]
\right]}^{-1}}\right]=[83x^2-118xy+42y^2]\right\}
\end{eqnarray*}
with the following group table:
\begin{table}[H]
\begin{center}
\begin{tabular}{l||llll}
 & $X_0$ & $X_1$ & $X_2$ & $X_3$ \\
 \hline\hline
$X_0$ & $X_0$ & $X_1$ & $X_2$ & $X_3$ \\
$X_1$ & $X_1$ &  $X_2$ & $X_3$ & $X_0$ \\
$X_2$ & $X_2$ & $X_3$ & $X_0$  & $X_1$ \\
$X_3$ & $X_3$ & $X_0$ & $X_1$  & $X_2$
\end{tabular}
\caption{Group table of $\mathcal{Q}_6(-20)/\sim_{[
2\tau_K+4,\,6]}$}
\end{center}
\end{table}
\end{example}

\begin{example}
Let $K=\mathbb{Q}(\sqrt{-23})$ with $d_K=-23$, and let
$\mathfrak{n}=[3\tau_K+9,\,12]$ with $N=12$. There are
three reduced forms
\begin{equation*}
Q_1=x^2+xy+6y^2,\quad
Q_2=2x^2-xy+3y^2,\quad
Q_3=2x^2+xy+3y^2.
\end{equation*}
If we take
\begin{equation*}
Q_1'=Q_1,\quad
Q_2'=Q_2^{\left[\begin{smallmatrix}
2&-1\\3&-1\end{smallmatrix}\right]}=29x^2-21xy+4y^2,\quad
Q_3'=Q_3^{\left[\begin{smallmatrix}
2&-1\\3&-1\end{smallmatrix}\right]}=41x^2-31xy+6y^2,
\end{equation*}
then we get
\begin{eqnarray*}
V_{Q_i'}/\sim_{Q_i'}&=&
\left\{
\left[\begin{bmatrix}
0 &1
\end{bmatrix}\right],\,
\left[\begin{bmatrix}
0 &5
\end{bmatrix}\right],\,
\left[\begin{bmatrix}
2 &1
\end{bmatrix}\right],\,
\left[\begin{bmatrix}
2 &7
\end{bmatrix}\right]
\right\}\quad(i=1,\,3),\\
V_{Q_2'}/\sim_{Q_2'}&=&
\left\{
\left[\begin{bmatrix}
0 &1
\end{bmatrix}\right],\,
\left[\begin{bmatrix}
0 &5
\end{bmatrix}\right],\,
\left[\begin{bmatrix}
2 &3
\end{bmatrix}\right],\,
\left[\begin{bmatrix}
2 &9
\end{bmatrix}\right]
\right\}
\end{eqnarray*}
and
\begin{eqnarray*}
\begin{array}{llll}
\gamma_{i,\,\left[\left[\begin{smallmatrix}0 & 1\end{smallmatrix}\right]\right]}=\begin{bmatrix}1&0\\0&1\end{bmatrix},
&
\gamma_{i,\,\left[\left[\begin{smallmatrix}0 & 5\end{smallmatrix}\right]\right]}=\begin{bmatrix}5&2\\12&5\end{bmatrix},
&
\gamma_{i,\,\left[\left[\begin{smallmatrix}2 & 1\end{smallmatrix}\right]\right]}=\begin{bmatrix}-1&-1\\2&1\end{bmatrix},
&
\gamma_{i,\,\left[\left[\begin{smallmatrix}2 & 7\end{smallmatrix}\right]\right]}=\begin{bmatrix}1&3\\2&7\end{bmatrix}
\quad(i=1,\,3),\vspace{0.1cm}\\
\gamma_{2,\,\left[\left[\begin{smallmatrix}0 & 1\end{smallmatrix}\right]\right]}=\begin{bmatrix}1&0\\0&1\end{bmatrix},
&
\gamma_{2,\,\left[\left[\begin{smallmatrix}0 & 5\end{smallmatrix}\right]\right]}=\begin{bmatrix}5&2\\12&5\end{bmatrix},
&
\gamma_{2,\,\left[\left[\begin{smallmatrix}2 & 3\end{smallmatrix}\right]\right]}=\begin{bmatrix}1&1\\2&3\end{bmatrix},
&
\gamma_{2,\,\left[\left[\begin{smallmatrix}2 & 9\end{smallmatrix}\right]\right]}=\begin{bmatrix}1&4\\2&9\end{bmatrix}.
\end{array}
\end{eqnarray*}
Hence, one can derive that
\begin{eqnarray*}
&&\mathcal{Q}_{12}(-23)/\sim_{[3\tau_K+9,\,12]}\\
&=&
\left\{Y_{0,\,0,\,0}=\left[Q_1'^{\,\gamma_{1,\,\left[\left[\begin{smallmatrix}0&1\end{smallmatrix}\right]
\right]}^{-1}}\right]=[x^2+xy+6y^2],\,
Y_{1,\,0,\,0}=\left[Q_1'^{\,\gamma_{1,\,\left[\left[\begin{smallmatrix}0&5\end{smallmatrix}\right]
\right]}^{-1}}\right]=[829x^2-691xy+144y^2],\right.\\
&&~~Y_{0,\,1,\,0}=\left[Q_1'^{\,\gamma_{1,\,\left[\left[\begin{smallmatrix}2&1\end{smallmatrix}\right]
\right]}^{-1}}\right]=[23x^2+23xy+6y^2],\,
Y_{1,\,1,\,0}=\left[Q_1'^{\,\gamma_{1,\,\left[\left[\begin{smallmatrix}2&7\end{smallmatrix}\right]
\right]}^{-1}}\right]=[59x^2-53xy+12y^2],\\
&&~~Y_{0,\,0,\,1}=\left[Q_2'^{\,\gamma_{2,\,\left[\left[\begin{smallmatrix}0&1\end{smallmatrix}\right]
\right]}^{-1}}\right]=[29x^2-21xy+4y^2],\,
Y_{1,\,0,\,1}=\left[Q_2'^{\,\gamma_{2,\,\left[\left[\begin{smallmatrix}0&5\end{smallmatrix}\right]
\right]}^{-1}}\right]=[2561x^2-2089xy+426y^2],\\
&&~~Y_{1,\,1,\,1}=\left[Q_2'^{\,\gamma_{2,\,\left[\left[\begin{smallmatrix}2&3\end{smallmatrix}\right]
\right]}^{-1}}\right]=[403x^2-295xy+54y^2],\,
Y_{0,\,1,\,1}=\left[Q_2'^{\,\gamma_{2,\,\left[\left[\begin{smallmatrix}2&9\end{smallmatrix}\right]
\right]}^{-1}}\right]=[2743x^2-2461xy+552y^2],\\
&&~~Y_{0,\,0,\,2}=\left[Q_3'^{\,\gamma_{3,\,\left[\left[\begin{smallmatrix}0&1\end{smallmatrix}\right]
\right]}^{-1}}\right]=[41x^2-31xy+6y^2],\,
Y_{1,\,0,\,2}=\left[Q_3'^{\,\gamma_{3,\,\left[\left[\begin{smallmatrix}0&5\end{smallmatrix}\right]
\right]}^{-1}}\right]=[3749x^2-3059xy+624y^2],\\
&&~\left.Y_{1,\,1,\,2}=\left[Q_3'^{\,\gamma_{3,\,\left[\left[\begin{smallmatrix}2&1\end{smallmatrix}\right]
\right]}^{-1}}\right]=[127x^2+199xy+78y^2],\,
Y_{0,\,1,\,2}=\left[Q_3'^{\,\gamma_{3,\,\left[\left[\begin{smallmatrix}2&7\end{smallmatrix}\right]
\right]}^{-1}}\right]=[2467x^2-2149xy+468y^2]\right\}
\end{eqnarray*}
which is isomorphic to $\mathbb{Z}_2\times\mathbb{Z}_2\times\mathbb{Z}_3$
via the mapping
\begin{equation*}
\mathcal{Q}_{12}(-23)/\sim_{[3\tau_K+9,\,12]}\rightarrow
\mathbb{Z}_2\times\mathbb{Z}_2\times\mathbb{Z}_3,\quad
Y_{a,\,b,\,c}\mapsto(a,\,b,\,c).
\end{equation*}
\end{example}

\bibliographystyle{amsplain}

\address{
Department of Mathematics Education\\
Dongguk University-Gyeongju\\
Gyeongju-si, Gyeongsangbuk-do 38066\\
Republic of Korea} {zandc@dongguk.ac.kr}
\address{Applied Algebra and Optimization Research Center\\
Sungkyunkwan University\\
Suwon-si, Gyeonggi-do 16419\\
Republic of Korea} {hoyunjung@skku.edu}
\address{
Department of Mathematical Sciences \\
KAIST \\
Daejeon 34141\\
Republic of Korea} {jkkoo@math.kaist.ac.kr}
\address{
Department of Mathematics\\
Hankuk University of Foreign Studies\\
Yongin-si, Gyeonggi-do 17035\\
Republic of Korea} {dhshin@hufs.ac.kr}

\end{document}